\newcommand{\FigureIntensityCone}[1]{
\begin{tikzpicture}[scale=#1]

\coordinate (zero) at (0,0);

\newcommand{\Size}{4}

\draw[->] (zero) -- (30:\Size);
\draw (zero) ++(0.5,0) ++ (30:1.6) node[rotate=30] {atomic};
\draw[->] (zero) -- (150:\Size);
\draw (zero) ++(-0.5,0) ++ (150:1.6) node[rotate=-30] {diffuse};

\fill (zero) circle (0.05cm);
\draw (0.2,-0.1) node[anchor=west] {0};

\draw (0.0,0.5) node {LLLs};

\newcommand{\Level}{1}
\draw plot [smooth] coordinates
 { (30:1.2*\Level)  (60:1.1*\Level)  (90:1.0*\Level) (150:0.8*\Level) };
\draw plot [smooth] coordinates
 { (150:1.2*\Level) (120:1.1*\Level) (90:1.0*\Level) (30:0.8*\Level) };

\draw (0,1.8) node {positive phase};
\draw (0,1.4) node {$\MeasuresPhasePositive=\MeasuresShearerStrict\subsetneq\MeasuresShearerExists$};

\renewcommand{\Level}{2.3}
\draw[dashed] (0,0) ++(30:\Level) arc (30:150:\Level);

\draw (-1.3,2.4) node {$\MeasuresPhaseZero$};
\draw (-1.3,2.8) node {zero phase};

\renewcommand{\Level}{3}
\draw (0,0) ++(30:\Level) arc (30:90:\Level);

\draw (2.0,3.2) node {no $1$-dependent PPs};
\draw (2.0,2.8) node {$\MeasuresEmpty$};

\draw (0,\Level) -- (0,\Size-0.25);

\end{tikzpicture}
}
\newcommand{\MathEnvOwn}{
\newtheorem{Thm}{Theorem}
\newtheorem{Cor}[Thm]{Corollary}
\newtheorem{Prop}[Thm]{Proposition}
\newtheorem{Lem}[Thm]{Lemma}
\newtheorem{Def}[Thm]{Definition}
}
\newcommand{\Naturals}{\mathbb{N}}
\newcommand{\NonNegInts}{\mathbb{N}_0}
\newcommand{\Integers}{\mathbb{Z}}
\newcommand{\Reals}{\mathbb{R}}
\newcommand{\Then}{\,\Rightarrow\,}
\newcommand{\Iff}{\,\Leftrightarrow\,}
\newcommand{\EnumSet}[1]{{\{{#1}\}}}
\newcommand{\DescSet}[2]{\{{#1}\mid {#2}\}}
\newcommand{\BigDescSet}[2]{{\left\{{#1}\,\!\middle|\,\!{#2}\right\}}}
\newcommand{\Cardinality}[1]{|{#1}|}
\newcommand{\IntRange}[1]{[{#1}]}
\newcommand{\Pochhammer}[2]{{#1^{[#2]}}}
\newcommand{\Lebesgue}{\mathcal{L}}
\newcommand{\D}[1]{\mathrm{d}{#1}}
\newcommand{\Modulus}[1]{|{#1}|}
\newcommand{\BigModulus}[1]{\left|{#1}\right|}
\newcommand{\ClosureOf}[1]{{\overline{#1}}}
\DeclareMathOperator{\diam}{diam}
\newcommand{\Proba}{\mathbb{P}}
\newcommand{\Expect}{\mathbb{E}}
\newcommand{\Space}{{\mathcal{X}}}
\newcommand{\Metric}{{\delta}}
\newcommand{\Distance}[2]{\Metric(#1,#2)}
\newcommand{\UnitSphere}[1]{U(#1)}
\newcommand{\ClosedUnions}{\mathcal{H}}
\newcommand{\BorelOf}[1]{\mathcal{B}^{#1}}
\newcommand{\BorelBounded}{\BorelOf{b}}
\newcommand{\BorelAll}{\BorelOf{}}
\newcommand{\BorelUnitDiam}{\BorelOf{1}}
\newcommand{\UPartSymbol}{\kappa}
\newcommand{\UPartNumOf}[1]{\UPartSymbol(#1)}
\newcommand{\UPartSup}{K}
\newcommand{\BorelFinite}{\BorelOf{\UPartSymbol}}
\newcommand{\MeasuresOf}[1]{\mathcal{M}^{#1}}
\newcommand{\MeasuresBounded}{\MeasuresOf{b}}
\newcommand{\DiffOpSymbol}{\Delta}
\newcommand{\MeasuresEmpty}{\MeasuresOf{\emptyset}}
\newcommand{\MeasuresPhaseZero}{\MeasuresOf{0}}
\newcommand{\MeasuresPhasePositive}{\MeasuresOf{+}}
\newcommand{\IsHCTuple}[1]{h_{#1}}
\newcommand{\OneDepPPM}{\mathcal{C}(M)}
\newcommand{\SupportPPOf}[1]{{#1^{\bullet}}}
\newcommand{\Atoms}{\mathcal{A}}
\newcommand{\DiffuseDomain}{\mathcal{D}}
\newcommand{\MeasuresShearerExists}{\MeasuresOf{\text{sh}}}
\newcommand{\MeasuresShearerStrict}{\MeasuresOf{>}}
\newcommand{\DistinguishedMinimalPP}{\mu}
\newcommand{\HardSphereM}[1]{{h_{#1,M}}}
\newcommand{\ShearersPP}[1]{\eta_{#1}}
\newcommand{\ShearersPPM}{\ShearersPP{M}}
\newcommand{\Penrose}[1]{\mathbf{P}(#1)}
\newcommand{\GfSymbol}{Z}                  
\newcommand{\GfG}[2]{\GfSymbol({#1},{#2})} 
\newcommand{\GfM}[1]{\GfSymbol({#1})}      
\newcommand{\GfRSymbol}{z}                 
\newcommand{\GfR}[3]{\GfRSymbol({#1},{#2},{#3})} 
\newcommand{\GfMR}[2]{\GfRSymbol(#1,#2)}   
\newcommand{\FunctionalM}[1]{f_{#1}}
\newcommand{\RootM}[1]{\lambda_{#1}}
\newcommand{\AvFun}{Q}
\newcommand{\AvProba}[1]{\AvFun(#1)}
\newcommand{\AvProbaRatio}[2]{q(#1,#2)}
\newcommand{\MNCVISets}{\mathcal{N}_{k,n}}
\newcommand{\ArxivOnly}[1]{#1}
\newcommand{\SelfOnly}[1]{}
\newcommand{\Abstract}{
\begin{abstract}
A point process is R-dependent, if it behaves independently beyond the minimum distance R.
This work investigates uniform positive lower bounds on the avoidance functions of R-dependent simple point processes with a common intensity.
Intensities with such bounds are described by the existence of Shearer's point process, the unique R-dependent and R-hard-core point process with a given intensity.
This work also presents several extensions of the Lovász Local Lemma, a sufficient condition on the intensity and R to guarantee the existence of Shearer's point process and exponential lower bounds.
Shearer's point process shares combinatorial structure with the hard-sphere model with radius R, the unique R-hard-core Markov point process.
Bounds from the Lovász Local Lemma convert into lower bounds on the radius of convergence of a high-temperature cluster expansion of the hard-sphere model.
This recovers a classic result of Ruelle on the uniqueness of the Gibbs measure of the hard-sphere model via an inductive approach à la Dobrushin.
\end{abstract}
}
\newcommand{\TitleFull}{Shearer's point process, the hard-sphere model and a continuum Lovász Local Lemma}
\newcommand{\TitleShort}{Shearer's PP, hard-spheres \& a continuum LLL}
\newcommand{\AuthorsFull}{Hofer-Temmel Christoph (math@temmel.me)}
\newcommand{\AuthorsShort}{Hofer-Temmel}
\newcommand{\Postal}%
{c/o FMW, MPC 10A, Postbus 10000, 1780 CA Den Helder, The Netherlands}
\newcommand{\KeyWords}%
{ avoidance function
; Lovász Local Lemma
; hard-sphere model
; partition function
; R-dependent point process
; R-hard-core
}
\newcommand{\KeyWordLine}{Keywords: \KeyWords{}}
\newcommand{\Head}{
 \maketitle
 \Abstract{}
 \KeyWordLine{}\\\MSC{}
}
\newcommand{\Acknowledgements}{
The author acknowledges the support of the VIDI project ``Phase transitions, Euclidean fields and random fractals'', NWO 639.032.916, while working at the VU Amsterdam.
The author thanks Marie-Colette van Lieshout for discussions about general PP theory and the anonymous reviewers and editor for their helpful comments.
}
\title{\TitleFull{}}
\author{\AuthorsFull{}}
\date{}
\begin{document}

\Head{}
\tableofcontents

\section{Introduction}
\label{sec_introduction}

\par
A \emph{point process} (PP) $\xi$ on a complete separable metric space is
\emph{$R$-dependent}, if events of $\xi$ based on Borel sets having mutual distance greater than or equal to $R$ are independent.
This work only deals with simple PPs.
Natural examples of $R$-dependent PPs are as follows.
A Poisson PP, which is even \emph{$0$-dependent}.
Range $R/2$ dependent thinnings of Poisson PPs à la Matérn~\cite{Matern__SpatialVariation_StochasticModelsAndTheirApplicationToSomeProblemsInForestSurveysAndOtherSamplingInvestigations__MFSS_1960,Stoyan_Stoyan__OnOneOfMaternsHardcorePointProcessModels__MN_1985}.
Poisson cluster PPs~\cite{Baddeley_VanLieshout_Moller__MarkovPropertiesOfClusterProcesses__AAP_1996,Daley_VereJones__AnIntroductionToTheTheoryOfPointProcesses_I__Springer_2003} with an offspring distribution supported on a sphere of radius $R/2$ around a cluster centre point.
Local constructions based on a Poisson PP, such as taking the centres of circumscribed circles of radius less than $R/2$ of triangles formed by triples of points from the Poisson PP.
Determinantal and permanental PPs~\cite{Borodin__DeterminantalPointProcesses__OUP_2011,Eisenbaum__StochasticOrderForAlphaPermanentalPointProcesses__SPA_2012,Soshnikov__DeterminantalRandomPointFields__UMN_2000} with a kernel of bounded range $R$ are also $R$-dependent.

\par
If the space is discrete, then a simple PP is a \emph{Bernoulli random field} (short BRF), an at most countable collection of $\EnumSet{0,1}$-valued random variables indexed by the space.
The study of $R$-dependent BRFs has a long history in the theory of discrete stochastic processes~\cite{Aaronson_Gilat_Keane__OnTheStructureOfOneDependentMarkovChains__JTP_1992,Aaronson_Gilat_Keane_DeValk__AnAlgebraicConstructionOfAClassOfOneDependentProcesses__AP_1989,Broman__OneDependentTrigonometricDeterminantalProcessesAreTwoBlockFactors__AP_2005,Burton_Goulet_Meester__OnOneDependentProcessesAndKBlockFactors__AP_1993,DeValk__TheMaximalAndMinimalTwoCorrelationOfAClassOfOneDependentZeroOneValuedProcesses__IJM_1988,DeValk__HilbertSpaceRepresentationsOfMDependentProcesses__AP_1993,Janson__RunsInMDependentSequences__AP_1984,Liggett_Schonmann_Stacey__DominationByProductMeasures__AoP_1997}.
Further uses of $R$-dependent BRFs are within the part of the probabilistic method in combinatorics building on the \emph{Lovász Local Lemma} (LLL)~\cite{Alon_Spencer__TheProbabilisticMethod_Ed3__Wiley_2008,Erdos_Lovasz__ProblemsAndResultsOn3ChromaticHypergraphsAndSomeRelatedQuestions__CMSJB_1975} and in graph colouring~\cite{Holroyd__OneDependentColoringByFinitaryFactors__Arxiv_2014,Jensen_Toft__GraphColoringProblems__Wiley_1995}.

\par
Let $(\Space,\Metric)$ be the complete separable metric space.
Without loss of generality, rescaling the metric reduces the discussion to $1$-dependence.
Let $\BorelBounded$ and $\BorelAll$ be the set of bounded and all Borel sets on $\Space$ respectively.
Let $\MeasuresBounded$ be the space of boundedly finite Borel measures.
The intensity measure of a PP is the expected number of points in a given Borel set $B$.
For $M\in\MeasuresBounded$, let $\OneDepPPM$ be the class of \emph{simple and $1$-dependent PP laws with intensity measure $M$}.
Where there is no danger of confusion, identify a PP and its law.
For $\xi\in\OneDepPPM$ and $B\in\BorelBounded$, the \emph{avoidance probability} is the probability of $\xi$ having no points in $B$.
The \emph{avoidance function} of $\xi$ maps $\BorelBounded$ to the avoidance probabilities of $\xi$.

\par
This paper studies uniform lower bounds on the avoidance function of PPs in $\OneDepPPM$.
First, Section~\ref{sec_dichotomy} extends a dichotomy by Shearer~\cite{Shearer__OnAProblemOfSpencer__Comb_1985} from BRFs to PPs.
For large intensity measure $M$, there exists a PP in $\OneDepPPM$ with an avoidance function vanishing on some bounded Borel set of positive $M$-measure (\emph{zero phase}).
For small intensity measure $M$, there is a uniform positive lower bound on the avoidance function of PPs in $\OneDepPPM$ (\emph{positive phase}).

\par
Second, for $M$ in the positive phase, Section~\ref{sec_spp} generalises a construction by Shearer~\cite{Shearer__OnAProblemOfSpencer__Comb_1985} of the unique PP in $\OneDepPPM$ with minimal avoidance function.
A set of points is \emph{$r$-hard-core}, if its points have mutual distance at least $r$.
A PP $\xi$ is $r$-hard-core, if its realisations are almost-surely so.
An $r$-hard-core and $R$-dependent PP must have $r\le R$.
$R$-dependence and $R$-hard-core together imply uniqueness of the PP law and existence only for small intensity measures.
If a PP with these properties exists, call it \emph{Shearer's PP}.
Shearer's PP has the minimal avoidance function in $\OneDepPPM$, because it avoids clusters of points and spreads its points all over space.
The existence of Shearer's PP describes the positive phase.

\par
Section~\ref{sec_hardspheres} recalls the \emph{hard-sphere model}, the unique Markov PP with range $R$ interaction and an $R$-hard-core.
The partition function of the hard-sphere model and the avoidance function of Shearer's PP have the same algebraic structure.
Shearer's PP exists for a given intensity measure $M$, if and only if the cluster expansion of the hard-sphere model converges uniformly and absolutely at negative fugacity $-M$, generalising the identification in the BRF case~\cite{Scott_Sokal__TheRepulsiveLatticeGasTheIndependentSetPolynomialAndTheLovaszLocalLemma__JSP_2005}.

\par
Third, Section~\ref{sec_lll} generalises the LLL~\cite{Erdos_Lovasz__ProblemsAndResultsOn3ChromaticHypergraphsAndSomeRelatedQuestions__CMSJB_1975} to the case of PPs.
Here, a LLL denotes a \emph{sufficient condition} on $\Space$ and $M$ for a \emph{uniform exponential lower bound} on the avoidance functions in $\OneDepPPM$ and to be in the positive phase.
The core idea is to derive global properties, i.e., being in the positive phase, from local properties.
On $\Reals^d$, this yields an explicit and uniform upper bound on the \emph{empty space functions} $F$~\cite[Section 15.1]{Daley_VereJones__AnIntroductionToTheTheoryOfPointProcesses_II__Springer_2008} and $J$ functions~\cite{Baddeley_VanLieshout__ANonparametricMeasureOfSpatialInteractionInPointPatterns__StatNL_1996} of isotropic and $1$-dependent PPs.
For the hard-sphere model, a LLL becomes a sufficient condition for the convergence of the cluster expansion and yields a lower bound on the radius of convergence.
This improves a classic lower bound via cluster expansion techniques by Ruelle~\cite{Ruelle__StatisticalMechanics_RigorousResults__Ben_1969} by a short inductive argument à la Dobrushin~\cite{Dobrushin__EstimatesOfSemiInvariantsForTheIsingModelAtLowTemperatures__AMST_1996} of less than two pages.

\par
Section~\ref{sec_stochastic_domination} explains the relevance of exponential lower bounds on the avoidance function for uniform stochastic domination by a Poisson PP.
Section~\ref{sec_onedep} discusses variations of $1$-dependence and a key inequality of the avoidance functions of $1$-dependent PPs.
The remaining sections contain proofs.

\section{Results}
\label{sec_results}

\subsection{Uniform bounds on the avoidance functions}
\label{sec_dichotomy}

\par
The first question is the existence of $1$-dependent PPs with a given intensity measure.
Let $\MeasuresEmpty := \DescSet{M\in\MeasuresBounded}{\OneDepPPM=\emptyset}$.
Proposition~\ref{prop_class_empty} shows that atoms of mass greater than one in the intensity measure are the only obstacle.
Thus, $\OneDepPPM\not=\emptyset$, if and only if $M\in\MeasuresBounded\setminus\MeasuresEmpty = \DescSet{M\in\MeasuresBounded}{\forall x\in\Space: M(\EnumSet{x})\le 1}$.

\par
This work is about lower bounds on the avoidance functions of $1$-dependent PPs.
The first result extends a dichotomy by Shearer~\cite{Shearer__OnAProblemOfSpencer__Comb_1985} from BRFs to PPs.

\begin{Thm}
\label{thm_dichotomy}
If $M\in\MeasuresBounded\setminus\MeasuresEmpty$, then it falls into one of two phases.
In the \emph{zero phase}, there is a $\xi\in\OneDepPPM$ with zero avoidance probability on some $B\in\BorelBounded$ of positive measure.
That is, $M(B)>0$ and $\Proba(\xi(B)=0) = 0$.
In the \emph{positive phase}, there is a unique $\DistinguishedMinimalPP\in\OneDepPPM$ minimizing the (conditional) avoidance probabilities uniformly in space and the class.
That is, for every $\xi\in\OneDepPPM$ and all $A,B\in\BorelBounded$, one has
\begin{equation*}
 \Proba(\xi(B)=0|\xi(A)=0)\ge\Proba(\DistinguishedMinimalPP(B)=0|\DistinguishedMinimalPP(A)=0)>0
 \,.
\end{equation*}
\end{Thm}

Theorem~\ref{thm_dichotomy} follows from Corollary~\ref{cor_strict_implies_positive} and Theorem~\ref{thm_nonstrict_implies_zero}.
Section~\ref{sec_spp} describes the distinguished PP $\DistinguishedMinimalPP$ in the positive phase.
Let $\MeasuresPhaseZero$ and $\MeasuresPhasePositive$ be the subsets of $\MeasuresBounded\setminus\MeasuresEmpty$ being in the zero phase and positive phase respectively.
See also Figure~\ref{fig_intensityCone}.

\subsection{The generating function}
\label{sec_gf}

\par
A \emph{configuration} is a countable collection of points in $\Space$.
A configuration $C$ is \emph{$1$-hard-core}, if its points have mutual distance at least one, i.e., for all $\EnumSet{x,y}\subseteq C: \Distance{x}{y}\ge 1$.
In the classic case of the metric space $(V,2d)$ derived from a graph $G:=(V,E)$ with geodesic metric $d$, the $1$-hard-core configurations are the graph-theoretic \emph{independent} sets of $G$.
For $n\in\NonNegInts$, let $\IsHCTuple{n}$ be the indicator function of $1$-hard-core tuples in $\Space^n$.

\begin{Def}
\label{def_gf}
The \emph{generating function $\GfSymbol$ of weighted $1$-hard-core configurations} is\\
$\BorelBounded\times\MeasuresBounded\to\Reals$ and
\begin{equation*}
 (B,M)\mapsto
 \sum_{n=0}^\infty \frac{(-1)^n}{n!}
 \int_{B^n} \IsHCTuple{n}(x_1,\dotsc,x_n) \prod_{i=1}^n M(\D{x_i})
 \,.
\end{equation*}
\end{Def}
Proposition~\ref{prop_gf_basic_properties} shows that $\GfSymbol$ is well-defined.
If $M$ is diffuse, then $\GfG{B}{M}$ is the expectation of a functional of a Poisson PP of intensity $M$.
Such a representation fails for intensity measures containing atoms.
The alternating sign in $\GfG{B}{M}$ is a convenience to avoid using a negative measure as argument to $\GfSymbol$ throughout most of this work.
For unambiguous choices of $M$, $\GfM{B}$ abbreviates $\GfG{B}{M}$.
While $\GfM{\emptyset}=1$ always holds, a central topic is which $M$ admit $\GfG{B}{M}\ge 0$ uniformly in $B$.
For all $A,B\in\BorelBounded$ with $\GfM{B}>0$, a key quantity is $\GfMR{A}{B}:=\GfM{A\cup B}/\GfM{B}$.
Section~\ref{sec_gf_properties} discusses the properties of $\GfSymbol$ and $\GfRSymbol$ in detail.

\subsection{Shearer's point process}
\label{sec_spp}

\par
This section is about $1$-dependent PPs with an $1$-hard-core.
For a given intensity measure, there is at most one such PP.
If it exists, call it \emph{Shearer's PP}, as it generalises the BRF construction of Shearer~\cite{Shearer__OnAProblemOfSpencer__Comb_1985}.

\begin{Thm}
\label{thm_spp}
If an $1$-hard-core $\ShearersPPM\in\OneDepPPM$ exists, then its law is unique.
Its avoidance function is $\GfSymbol$, i.e., for each $B\in\BorelBounded$, $\Proba(\ShearersPPM(B)=0)=\GfM{B}$.
Such a PP exists for all intensity measures in
\ArxivOnly{\\}{ }$\MeasuresShearerExists:=\DescSet{M\in\MeasuresBounded}{\forall B\in\BorelBounded: \GfG{B}{M}\ge 0}$.
\end{Thm}

\par
The proofs of this section's statements are in Section~\ref{sec_spp_proofs}.
If Shearer's PP exists, then it minimizes the (conditional) avoidance probabilities within the class of $1$-dependent PPs with the same intensity measure.

\begin{Thm}
\label{thm_minimality}
If $M\in\MeasuresShearerExists$, then, for all $\xi\in\OneDepPPM$ and $A, B\in\BorelBounded$ with $\GfM{B}>0$,
\begin{equation*}
 \Proba(\xi(A)=0|\xi(B)=0)\ge\GfMR{A}{B}\ge 0
 \,.
\end{equation*}
\end{Thm}

\par
If Shearer's PP has a positive avoidance function, then it is the unique PP $\DistinguishedMinimalPP$ from the positive phase in Theorem~\ref{thm_dichotomy}.
\ArxivOnly{\\}{ }
Let $\MeasuresShearerStrict:= \DescSet{ M\in\MeasuresBounded}{\forall B\in\BorelBounded: \GfG{B}{M}> 0 }$.
\begin{Cor}
\label{cor_strict_implies_positive}
If $M\in\MeasuresShearerStrict$, then $M\in\MeasuresPhasePositive$ and Shearer's PP $\ShearersPPM$ has the minimal (conditional) avoidance function in $\OneDepPPM$.
In short, $\MeasuresShearerStrict\subseteq\MeasuresPhasePositive$.
\end{Cor}
Corollary~\ref{cor_strict_implies_positive} is a direct consequence of Theorem~\ref{thm_minimality}.
On the other hand, if Shearer's PP does not have a positive avoidance function on $\BorelBounded$ or does not exist, then there is a PP with zero avoidance probability on some bounded Borel set.
This puts the intensity measure $M$ in the zero phase.
\begin{Thm}
\label{thm_nonstrict_implies_zero}
If $M\not\in\MeasuresShearerStrict$ and $M\not\in\MeasuresEmpty$, then $M\in\MeasuresPhaseZero$.
\end{Thm}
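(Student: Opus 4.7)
The plan is to prove theorem~\ref{thm_nonstrict_implies_zero} by splitting on whether $M \in \MeasuresShearerExistsX$ and, in the non-existence case, rescaling $M$ downward to the boundary of $\MeasuresShearerExistsX$ before lifting back to $M$ via an independent Poisson superposition.

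First I dispose of the boundary case $M \in \MeasuresShearerExistsX \setminus \MeasuresShearerStrictX$. Here $\GfG{M}{\cdot}$ is non-negative on $\BorelXBounded$ and vanishes on some $B_0$. Theorem~\ref{thm_spp} supplies Shearer's PP $\ShearersPPM \in \OnePPonM$ with $\Proba(\ShearersPPM(B_0) = 0) = \GfG{M}{B_0} = 0$, so $\ShearersPPM$ itself attests to $M \in \MeasuresPhaseZero$.

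Otherwise $M \notin \MeasuresShearerExistsX$, and I work with the scaling family $\Set{sM : s \in [0,1]}$. Two structural observations are key: $\MeasuresShearerExistsX$ is closed under scaling down, because the independent $s$-thinning of $\ShearersPP{M'}$ is $1$-dependent and $1$-hard-core of intensity $sM'$, hence must equal $\ShearersPP{sM'}$ by the uniqueness part of theorem~\ref{thm_spp}; and for each $B \in \BorelXBounded$ the map $s \mapsto \GfG{sM}{B}$ is an entire power series in $s$ by the absolutely convergent series~\eqref{eq_gf_def}. Setting $s^* := \sup\Set{s \in [0,1] : sM \in \MeasuresShearerExistsX}$, taking a sequence $s_n \uparrow s^*$ with $s_n M \in \MeasuresShearerExistsX$ and passing to the pointwise-in-$B$ limit gives $s^*M \in \MeasuresShearerExistsX$, while the case hypothesis forces $s^* < 1$; the hypothesis $\OnePPonM \not= \emptyset$ enters to rule out the degenerate $s^* = 0$, via thinnings of any given $\xi \in \OnePPonM$. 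The chief obstacle is to show $s^*M \notin \MeasuresShearerStrictX$, i.e.\ $\GfG{s^*M}{B_0} = 0$ for some $B_0$: the intended route is to pick $s_n \downarrow s^*$ with $s_n M \notin \MeasuresShearerExistsX$ and associated witnesses $B_n$ satisfying $\GfG{s_n M}{B_n} < 0$, and then extract a limit $B_0$ with $\GfG{s^*M}{B_0} \le 0$ (forcing equality by $s^*M \in \MeasuresShearerExistsX$); the delicate part is enough topological control on the $B_n$ to take such a limit, since $\BorelXBounded$ is not itself compact.

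With $\GfG{s^*M}{B_0} = 0$ in hand, the witness is built by superposition. Let $\nu$ be an independent Poisson PP of intensity $(1-s^*)M$, which is $0$-dependent and hence $1$-dependent. The superposition $\xi := \ShearersPP{s^*M} + \nu$ is $1$-dependent with intensity $M$, so $\xi \in \OnePPonM$, and by independence
\begin{equation*}
 \Proba(\xi(B_0) = 0) = \GfG{s^*M}{B_0} \cdot e^{-(1-s^*)M(B_0)} = 0,
\end{equation*}
exhibiting $M \in \MeasuresPhaseZero$.
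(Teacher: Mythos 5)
Your treatment of the boundary case $M\in\MeasuresShearerExistsX\setminus\MeasuresShearerStrictX$ is exactly the paper's. But the case $M\notin\MeasuresShearerExistsX$ has a genuine gap, precisely at the step you flag as the ``chief obstacle'', and the correct fix is structural rather than technical: one must not choose a global scaling $s^*$ first and look for a witness set afterwards.

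Your $s^* := \sup\Set{s\in[0,1]: sM\in\MeasuresShearerExistsX}$ can equal $0$ even when $\OnePPonM\ne\emptyset$. For instance, take $\SpaceX=\NumReal$ and $M$ diffuse (so $\OnePPonM\ne\emptyset$ by proposition~\ref{prop_class_empty}) with density growing so fast that $\sup\Set{M(A): A\in\BorelXUnitDiam}=\infty$. Then for every $s>0$ there is an $A\in\BorelXUnitDiam$ with $\GfG{sM}{A}=1-sM(A)<0$, so $sM\notin\MeasuresShearerExistsX$ for all $s>0$ and $s^*=0$; but $s^*M=0\in\MeasuresShearerStrictX$, so no witness $B_0$ with $\GfG{s^*M}{B_0}=0$ exists and the rest of the argument is unavailable. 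Thinning $\xi\in\OnePPonM$ does not help here: that only shows $\OnePPon{sM}\ne\emptyset$, not $sM\in\MeasuresShearerExistsX$. Even aside from the $s^*=0$ problem, extracting a single $B_0$ as a ``limit'' of witnesses $B_n$ in $\BorelXBounded$ has no obvious meaning, as you note.

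The paper reverses the quantifier order and never needs a global $s^*$. Since $M\notin\MeasuresShearerExistsX$, fix one closed $B$ with $\GfG{M}{B}<0$ and look only at the one-variable function $\lambda\mapsto\FunctionalM{B}(\lambda)=\GfG{\lambda M}{B}$, which is continuous (proposition~\ref{prop_continuity_functional}) with $\FunctionalM{B}(0)=1$. Define $\Lambda:=\RootM{B}=\inf\Set{\lambda:\FunctionalM{B}(\lambda)<0}$. Then $0<\Lambda<1$ and $\GfG{\Lambda M}{B}=0$ hold automatically, and proposition~\ref{prop_root} gives $\Lambda M\in\MeasuresShearerExists{B}$, so Shearer's PP with intensity $\Lambda M$ exists on $B$ and has vanishing avoidance probability on $B$. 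No compactness or set-limits are needed, and positivity of $\Lambda$ is free.

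Your superposition step also needs repair. With $\vartheta$ Poisson and $M$ atomic somewhere, $\ShearersPP{\Lambda M}+\vartheta$ may have multiple points, so it is not a simple PP; you must pass to its support PP $\SupportPPOf{(\ShearersPP{\Lambda M}+\vartheta)}$, and then taking the support changes the intensity at atoms. The paper therefore does not give $\vartheta$ the intensity $(1-\Lambda)M$ on atoms but solves $m_x=\Lambda m_x+(1-\Lambda m_x)(1-\exp(-n_x))$ for $n_x$ and checks finiteness of the resulting $N$ via the elementary inequality~\eqref{eq_log_inequality}. (Relatedly, the paper disposes of atoms of size exactly one first, since there $n_x$ would be infinite; in that case any $\xi\in\OnePPonM$ already has $\Proba(\xi(\Set{x})=0)=0$.) Finally, since $\Lambda M$ need only be in $\MeasuresShearerExists{B}$, not $\MeasuresShearerExistsX$, the construction uses Shearer's PP on $B$ only, patched with an arbitrary strong $1$-dependent member of $\OnePPonM$ restricted to $\SpaceX\setminus B$.
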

Theorem~\ref{thm_nonstrict_implies_zero} and Corollary~\ref{cor_strict_implies_positive} together imply that $\MeasuresShearerStrict=\MeasuresPhasePositive$.

\par
Independent thinning of Shearer's PP decreases the intensity measure and preserves the two characterising properties of $1$-dependence and $1$-hard-core.
A bigger intensity measure decreases the avoidance function or inhibits the existence of $1$-dependent PPs.
\begin{Thm}
\label{thm_monotonicity}
The sets $\MeasuresShearerExists$ and $\MeasuresShearerStrict=\MeasuresPhasePositive$ are \emph{down-sets}, i.e., closed under decreasing the measure.
The set $\MeasuresEmpty$ is an \emph{up-set}, i.e., closed under increasing the measure.
The set $\MeasuresPhaseZero$ is an up-set, as long as atoms do not increase beyond mass one.
\end{Thm}

\par
In general, Shearer's PP differs from all other $R$-independent models in the introduction.
Probabilistic constructions of Shearer's PP are known only in special cases.
Details are in Section~\ref{sec_spp_different}.

\subsection{The hard-sphere model}
\label{sec_hardspheres}

\par
Another simple $1$-hard-core PP related to the function $\GfSymbol$ is the \emph{hard-sphere model}.
It is a Markov PP with the most repulsive range $1$ interaction~\cite[Section 1.2.2]{Ruelle__StatisticalMechanics_RigorousResults__Ben_1969}.
A common visualisation of the hard-sphere model is as a collection of non-overlapping open spheres with radius $1/2$ representing the hard cores of atoms.

\par
The hard-sphere model $\HardSphereM{B}$ in a \emph{finite volume} $B\in\BorelBounded$ with \emph{fugacity} $M\in\MeasuresBounded$ and empty boundary conditions has Janossy intensity
\begin{equation*}
 \Proba(\HardSphereM{B} = \D{(x_1,\dotsc,x_n)})
 = \frac{\IsHCTuple{n}(x_1,\dotsc,x_n)}{\GfG{B}{-M}} \prod_{i=1}^n M(\D{x_i})
 \,.
\end{equation*}
The normalising factor $\GfG{B}{-M}$ is the \emph{partition function} of the hard-sphere model.
The argument $-M$ cancels the alternating sign in the definition of $\GfSymbol$.
For diffuse $M$, $\HardSphereM{B}$ equals a Poisson($M$) PP on $B$ conditioned to be $1$-hard-core.

\par
The analysis of the hard-sphere model centres on the partition function and derived quantities, in particular ratios (\emph{reduced correlations}) and its logarithm (\emph{free energy}).
Lower bounds on $\GfSymbol$ and $\GfRSymbol$ and their logarithms at negative fugacity play a key role in the low fugacity case (the \emph{high temperature} case) and establish uniqueness of the Gibbs measure~\cite{Fernandez_Procacci__ClusterExpansionForAbstractPolymerModels_NewBoundsFromAnOldApproach__CMP_2007,Scott_Sokal__TheRepulsiveLatticeGasTheIndependentSetPolynomialAndTheLovaszLocalLemma__JSP_2005}.
A well-known tool is the \emph{cluster expansion}, a series expansion of $\log\GfSymbol$~\cite{MiracleSole__OnTheTheoryOfClusterExpansions__MPRF_2010}.
It fails first at negative fugacities, that is along the boundary of $\MeasuresShearerStrict$.
Details are in Section~\ref{sec_ce}.

\subsection{Sufficient conditions for exponential bounds}
\label{sec_lll}

\par
This section contains several LLLs in Theorems~\ref{thm_lll_rough} and~\ref{thm_lll_smooth} and Corollary~\ref{cor_lll_lebesgue}.
Theorem~\ref{thm_lll_generic} discusses the relation between $\MeasuresShearerStrict$ and $\MeasuresShearerExists$.
The proofs are in Section~\ref{sec_lll_proofs}.

\par
Let $\BorelUnitDiam$ be the set of Borel sets of diameter less than one.
Let $\sqcup$ denote disjoint union.
The \emph{unit partition number} of a Borel set $B\in\BorelAll$ is
\begin{equation*}
 \UPartNumOf{B}:=
 \inf\BigDescSet{k\in\Naturals}%
 {\exists A_1,\dotsc,A_k\in\BorelUnitDiam: B=\bigsqcup_{i=1}^k A_i }
 \,.
\end{equation*}
Let $\UnitSphere{x}$ be the \emph{open unit sphere} around $x$.
The nice case are spaces where the unit-scale structure has an \emph{uniform exponential growth bound} on spheres of radius $r$.
Spaces such as $\Reals^d$, the hyperbolic plane or graphs with uniformly bounded degree fulfil this, but infinite dimensional metric spaces such as $l_2$ do not.
This is equivalent to the bound
\begin{equation}
\label{eq_upartnum_finite}
 \UPartSup
 :=
 \sup\DescSet{\UPartNumOf{\UnitSphere{x}}}{ x\in\Space}
 <\infty
 \,.
\end{equation}

\par
The first sufficient condition generalises the \emph{symmetric LLL}~\cite{Erdos_Lovasz__ProblemsAndResultsOn3ChromaticHypergraphsAndSomeRelatedQuestions__CMSJB_1975} and \emph{Dobrushin's condition}~\cite{Dobrushin__EstimatesOfSemiInvariantsForTheIsingModelAtLowTemperatures__AMST_1996} respectively.

\begin{Thm}
\label{thm_lll_rough}
\begin{subequations}
\label{eq_lll_rough}
Let $M\in\MeasuresBounded$.
If~\eqref{eq_upartnum_finite} holds and, for each $A\in\BorelUnitDiam$,
\begin{equation}
\label{eq_lll_rough_measure}
 M(A)\le\frac{(\UPartSup+1)^{\UPartSup+1}}{(\UPartSup+2)^{\UPartSup+2}}
 \,,
\end{equation}
then $M\in\MeasuresShearerStrict$ and, for all $A,B\in\BorelBounded$,
\begin{equation}
\label{eq_lll_rough_bound}
 \GfMR{A}{B}
 \ge\left(\frac{\UPartSup+1}{\UPartSup+2}\right)^{\UPartNumOf{A\setminus B}}
 >0
 \,.
\end{equation}
\end{subequations}
\end{Thm}

\par
Under condition~\eqref{eq_upartnum_finite}, a slight loss of precision sharpens the positive lower bound in the positive phase to exponential lower bounds. Proposition~\ref{thm_lll_generic} shows that $\MeasuresShearerExists$ is a subset of the closure of $\MeasuresShearerStrict$.
In general, $\MeasuresShearerExists$ is not the closure of $\MeasuresShearerStrict$~\cite[Chapter 8]{Scott_Sokal__TheRepulsiveLatticeGasTheIndependentSetPolynomialAndTheLovaszLocalLemma__JSP_2005}.
It is so on a finite graph, but already fails on a connected infinite graph.

\begin{Thm}
\label{thm_lll_generic}
Let $\alpha>0$.
If $(1+\alpha)M\in\MeasuresShearerExists$ and~\eqref{eq_upartnum_finite} holds, then $M\in\MeasuresShearerStrict$ and, for all $A,B\in\BorelBounded$,
$\GfMR{A}{B}
 \ge\left(\frac{\alpha}{1+\alpha}\right)^{\UPartNumOf{A\setminus B}}
 >0
$.
\end{Thm}

\par
The second sufficient condition generalises the \emph{asymmetric LLL}~\cite{Erdos_Lovasz__ProblemsAndResultsOn3ChromaticHypergraphsAndSomeRelatedQuestions__CMSJB_1975}.
It resembles a \emph{continuous version of the Koteck\'{y}-Preiss condition}~\cite{Kotecky_Preiss__ClusterExpansionForAbstractPolymerModels__CMP_1986}.

\begin{Thm}
\label{thm_lll_smooth}
Let $M,N\in\MeasuresBounded$ with $N$ being absolutely continuous with respect to $M$.
If, for each $A\in\BorelUnitDiam$,
\begin{subequations}
\label{eq_lll_smooth}
\begin{equation}
\label{eq_lll_smooth_condition}
 \int_A \exp(N(\UnitSphere{x}\setminus A)) M(\D{x}) \le 1 - \exp(-N(A))
 \,,
\end{equation}
then $M\in\MeasuresShearerStrict$ and, for all $A,B\in\BorelBounded$,
\begin{equation}
\label{eq_lll_smooth_bound}
 \GfMR{A}{B}\ge\exp(-N(A\setminus B))>0
 \,.
\end{equation}
\end{subequations}
\end{Thm}

\par
A stronger and more practical version of condition~\eqref{eq_lll_smooth_condition} is
\begin{equation}
\label{eq_lll_smooth_condition_stronger}
 \forall A\in\BorelUnitDiam, M\text{-a.e.
}x\in A:
 \quad
 M(A)\exp(N(\UnitSphere{x}\setminus A))\le 1-\exp(-N(A))
 \,.
\end{equation}
Specialising~\eqref{eq_lll_smooth_condition_stronger} to the space $\Reals^d$ with the Lebesgue measure yields an explicit bound.
\begin{Cor}
\label{cor_lll_lebesgue}
Consider $\Space=\Reals^d$ with the Lebesgue measure $\Lebesgue$.
Let $V$ be the volume of the $d$-dimensional unit sphere.
For $\lambda\le 1/(e V)$, let $\alpha$ be the unique solution of $\lambda=\alpha\exp(-\alpha V)$ in $[0,1/(eV)]$.
This implies that $\lambda\Lebesgue\in\MeasuresShearerStrict$ and, for all $A,B\in\BorelBounded$, $\GfMR{A}{B}\ge \exp(-\alpha\Lebesgue(A\setminus B))>0$.
\end{Cor}

\par
In the context of the hard-sphere model, Corollary~\ref{cor_lll_lebesgue} yields a uniform upper bound on finite volume free energies.
That is, $\sup\BigDescSet{-\frac{\log\GfG{B}{-\lambda\Lebesgue}}{\Lebesgue(B)}}{B\in\BorelBounded}\le \alpha$.
Together with taking the limit of $-\frac{\log\GfG{B}{-\lambda\Lebesgue}}{\Lebesgue(B)}$ in the van Hove sense~\cite[Definition 2.1.1]{Ruelle__StatisticalMechanics_RigorousResults__Ben_1969}, this implies the existence and complete analyticity of the infinite volume free energy for fugacities less than $1/(e V)$.
Thus, Corollary~\ref{cor_lll_lebesgue} gives an alternative proof, avoiding cluster expansion, of a classic result for uniqueness of the Gibbs measure of the hard-sphere model on $\Reals^d$ for small fugacities~\cite[(5.2) in Section 4.5]{Ruelle__StatisticalMechanics_RigorousResults__Ben_1969}.

\par
The LLLs and derived bounds here are not optimal.
For example, the optimal bound on $\Reals$ in the context of Corollary~\ref{cor_lll_lebesgue} is $1/e$ instead of $1/(2e)$~\cite{HoferTemmel__ShearersPointProcessAndTheHardSphereModelInOneDimension}.
There are two reasons to not pursue further improvements of the LLLs here.
First, improvements in the main cases of interest are already present in the literature on the hard-core and hard-sphere models~\cite{Fernandez_Procacci__ClusterExpansionForAbstractPolymerModels_NewBoundsFromAnOldApproach__CMP_2007,Fernandez_Procacci_Scoppola__TheAnalyticityRegionOfTheHardSphereGas_ImprovedBounds__JSP_2007}.
Second, in the context of Corollary~\ref{cor_lll_lebesgue}, the best bounds differ by at most a factor of $e$~\cite[(5.17) in Section 4.5]{Ruelle__StatisticalMechanics_RigorousResults__Ben_1969}.

\begin{figure}
\begin{center}
\FigureIntensityCone{1.3}
\end{center}
\caption{
The cone of boundedly finite Borel measures $\MeasuresBounded$, with diffuse measures on the left axis, atomic measures on the right axis and the zero measure in the apex.
The positive and zero phases ($\MeasuresPhasePositive$ and $\MeasuresPhaseZero$) and $\MeasuresEmpty$ partition the cone.
The sets $\MeasuresPhasePositive$, $\MeasuresShearerExists$ and $\MeasuresShearerStrict$ are down-sets, whereas $\MeasuresEmpty$ and $\MeasuresPhaseZero\sqcup\MeasuresEmpty$ are up-sets.
The dotted line represents the atypical closure properties of $\MeasuresShearerStrict$ and contains $\MeasuresShearerExists\setminus\MeasuresShearerStrict$.
The LLLs carve out parts of the positive phase and guarantee exponential lower bounds.
}
\label{fig_intensityCone}
\end{figure}

\subsection{Stochastic domination and order}
\label{sec_stochastic_domination}

\par
A PP law $\varphi$ \emph{stochastically dominates} a PP law $\xi$, if there is a coupling of them such that $\varphi$ contains almost surely all of $\xi$'s points.
Stochastic domination implies that $\varphi$'s avoidance function is smaller than $\xi$'s avoidance function.
In the context of $1$-dependent BRFs on locally finite graphs, the existence of Shearer's PP is equivalent to uniform stochastic domination by a Bernoulli product field ~\cite{Liggett_Schonmann_Stacey__DominationByProductMeasures__AoP_1997,Temmel__ShearersMeasureAndStochasticDominationOfProductMeasures__JTP_2014}.
Uniform exponential lower bounds from the LLLs are a first step towards extending the stochastic domination result to $1$-dependent PPs.

\par
Stochastic domination of Shearer's PP by a Poisson PP would permit a probabilistic construction and simulation by thinning the dominating Poisson PP.
The intrinsic coupling of Shearer's PP by independent thinning implies that stochastic domination for the largest intensities in $\MeasuresShearerExists$ suffices~\cite{Rolski_Szekli__StochasticOrderingAndThinningOfPointProcesses__SPA_1991}.

\par
Using the terminology from~\cite{Blaszczyszyn_Yogeshwaran__OnComparisonOfClusteringPropertiesOfPointProcesses__AAP_2014}, $\ShearersPPM$ is \emph{weakly sub Poisson}.
This means that its moments and avoidance function are both smaller than the ones of a Poisson($M$) PP.
The first follows from Proposition~\ref{prop_spp_characterisation_uniqueness} and~\eqref{eq_int_bound}.
The second follows by applying~\cite[Prop 3.1]{Blaszczyszyn_Yogeshwaran__OnComparisonOfClusteringPropertiesOfPointProcesses__AAP_2014} to the implication of Proposition~\ref{prop_monotonicity}, that, for disjoint $A,B\in\BorelBounded$,
\begin{equation*}
 \Proba(\ShearersPPM(A\sqcup B)=0)
 = \GfG{A\sqcup B}{M}
 \le \Proba(\ShearersPPM(A)=0)\Proba(\ShearersPPM(B)=0)
 \,.
\end{equation*}
Being weakly-sub Poisson yields concentration inequalities~\cite[Sec 3.3]{Blaszczyszyn_Yogeshwaran__ClusteringComparisonOfPointProcessesWithApplicationsToRandomGeometricModels__Springer_2015}.

\section{About one-dependent PPs}
\label{sec_onedep}

\par
This section discusses different notions of $1$-dependence in~\eqref{eq_onedep}.
Proposition~\ref{prop_fi} presents a key inequality of avoidance functions of $1$-dependent PPs.
Proposition~\ref{prop_strongOneMulti} characterises the avoidance functions of $1$-dependent PPs.
Proposition~\ref{prop_class_empty} investigates the existence of simple $1$-dependent PPs.

\par
\begin{subequations}
\label{eq_onedep}

For $A\in\BorelAll$ and a PP $\xi$, let $A\xi$ be the restriction of $\xi$ to $A$.
Recall that the metric is $\Metric$.
A PP $\xi$ is \emph{strong $1$-dependent}, if, for all $A,B\in\BorelAll$,
\begin{equation}
\label{eq_onedep_strong}
 \Distance{A}{B}:=\inf\DescSet{\Distance{x}{y}}{ x\in A, y\in B}\ge 1
 \Then
 A\xi\text{ is independent of }B\xi
 \,.
\end{equation}
All the examples in the introduction, the PP in the zero phase of Theorem~\ref{thm_dichotomy}, the PP counterexample in the proof of Theorem~\ref{thm_nonstrict_implies_zero}, Shearer's PP and the PP in Proposition~\ref{prop_class_empty} are strong $1$-dependent.

\begin{Prop}
\label{prop_strongOneMulti}
A PP is strong $1$-dependent, if and only if its avoidance function $Q$ is \emph{$1$-multiplicative}.
That is, for all $A,B\in\BorelAll$, $\Distance{A}{B}\ge 1 \Then Q(A\cup{}B)=Q(A)Q(B)$.
\end{Prop}

\begin{proof}
The necessity is evident, and the sufficiency follows from the fact that the avoidance function determines the law of a simple PP~\cite[9.2.XIII]{Daley_VereJones__AnIntroductionToTheTheoryOfPointProcesses_II__Springer_2008}.
\end{proof}

A PP $\xi$ is \emph{weak $1$-dependent}, if, for $M$-a.e.
$x$ and every $B\in\BorelAll$,
\begin{equation}
\label{eq_onedep_weak}
 \Distance{x}{B}:=\Distance{\EnumSet{x}}{B}\ge 1
 \Then
 \Proba_x(\xi(B)=0)=\Proba(\xi(B)=0)
 \,.
\end{equation}
In other words, weak $1$-dependence reduces Palm probabilities for events at distance more than one away from the base point to normal probabilities.
\end{subequations}

\begin{Prop}
\label{prop_strong_is_weak}
A strong $1$-dependent PP is weak $1$-dependent.
\end{Prop}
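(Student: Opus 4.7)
The plan is to use the defining integral identity of the Palm probability $\Proba_x$ to reduce the claim to a statement about ordinary probabilities, which then follows immediately from strong $1$-dependence.

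Fix $B\in\BorelX$ and set $D_B:=\Set{y\in\SpaceX\mid\MetricX{y}{B}\ge 1}$. Recall that the Palm probability is characterised by the reduced Campbell identity
\begin{equation*}
 \forall A\in\BorelX:\quad
 \int_A \Proba_x(\xi(B)=0)\,M(dx)
 = \Expect\bigl[\xi(A)\Iverson{\xi(B)=0}\bigr]\,.
\end{equation*}
Restrict to Borel sets $A\subseteq D_B$, so that $\MetricX{A}{B}\ge 1$. Strong $1$-dependence applied to $A\xi$ and $B\xi$ renders $\xi(A)$ and $\Iverson{\xi(B)=0}$ independent, hence the right-hand side equals $M(A)\Proba(\xi(B)=0)=\int_A\Proba(\xi(B)=0)\,M(dx)$. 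Since $A\subseteq D_B$ is arbitrary, there is an $M$-null set $N_B\subseteq D_B$ outside of which $\Proba_x(\xi(B)=0)=\Proba(\xi(B)=0)$.

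To upgrade this to a single null set valid for all $B$ simultaneously, I exploit the separability of $(\SpaceX,\MetricXSymbol)$. Pick a countable algebra $\mathcal{F}\subseteq\BorelX$ generating $\BorelX$, for instance the finite unions of open balls with rational radii centred at a countable dense subset, and set $N:=\bigcup_{B\in\mathcal{F}}N_B$, still $M$-null. For $x\notin N$ and $B\in\BorelX$ with $\MetricX{x}{B}\ge 1$, approximate $B$ from within by $B_n\in\mathcal{F}$ with $B_n\nearrow B$; since $B_n\subseteq B$, each $B_n$ still satisfies $\MetricX{x}{B_n}\ge 1$, so $\Proba_x(\xi(B_n)=0)=\Proba(\xi(B_n)=0)$. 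Because $\Iverson{\xi(B_n)=0}\searrow\Iverson{\xi(B)=0}$ monotonically, dominated convergence passes to the limit on both sides and yields the desired identity for $B$.

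The main obstacle is the reduction of the uncountable family of $M$-null sets $\Set{N_B}_{B\in\BorelX}$ to a single uniform null set; this is handled by the separability argument above, which relies on the fact that the avoidance events are determined by a countable generating subfamily of $\BorelX$. Everything else is routine manipulation of the Palm disintegration plus a single invocation of the independence provided by strong $1$-dependence.
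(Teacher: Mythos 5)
Your overall route coincides with the paper's: use the Campbell identity to trade the Palm probability for an ordinary expectation, then invoke strong $1$-dependence to factorise the right-hand side and read off that the Palm density equals the unconditional probability. Through the step that produces a single null set $N_B$ for each fixed $B$, the argument is fine.

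The gap is in the upgrade from the countable algebra $\mathcal{F}$ to all of $\BorelX$. You ask for $B_n\in\mathcal{F}$ with $B_n\nearrow B$ monotonically, but this is in general impossible: if $\mathcal{F}$ consists of finite Boolean combinations of open balls (so each element is $\Delta^0_2$), then any increasing countable union is $F_\sigma$, yet a bounded Borel set such as $[0,1]\setminus\mathbb{Q}$ in $\NumReal$ is $G_\delta$ and (by Baire category) not $F_\sigma$, hence is not an increasing limit of elements of any such algebra. So the pointwise passage to the limit breaks down, and the conclusion is not obtained for all $B\in\BorelX$.

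The repair is to argue at the level of measures rather than pointwise. For $M$-a.e.\ $x$, both $\Proba_x$ and the law of $\xi$ are probability measures on the configuration space. Your agreement on the countable family gives their equality on the avoidance events $\Set{\xi(B)=0}$ for $B\in\mathcal{F}$ with $\MetricX{x}{B}\ge 1$, i.e.\ $B\subseteq\SpaceX\setminus\UnitSphere{x}$. Together with the full space, this collection is a $\pi$-system (avoidance events are stable under intersection, $\Set{\xi(B_1)=0}\cap\Set{\xi(B_2)=0}=\Set{\xi(B_1\cup B_2)=0}$) generating $\sigma\bigl((\SpaceX\setminus\UnitSphere{x})\xi\bigr)$; Dynkin's $\pi$-$\lambda$ theorem then yields agreement on the full $\sigma$-algebra, hence on $\Set{\xi(B)=0}$ for every Borel $B$ with $\MetricX{x}{B}\ge 1$. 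This is precisely the paper's closing observation that the $\sigma$-algebras $\sigma(A\xi)$, over bounded $A$ disjoint from $\UnitSphere{x}$, generate $\sigma\bigl((\SpaceX\setminus\UnitSphere{x})\xi\bigr)$.
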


Examples of weak, but not strong, $1$-dependent PPs are mixtures of random shifts of strong $1$-dependent BRFs in~\cite[Section 5]{Liggett_Schonmann_Stacey__DominationByProductMeasures__AoP_1997}.

\begin{proof}
Let $\xi\in\OneDepPPM$.
The \emph{Campbell measure} $C$~\cite[(13.1.1a)]{Daley_VereJones__AnIntroductionToTheTheoryOfPointProcesses_II__Springer_2008}
on $(B,E)\in\BorelAll\times\sigma(\xi)$ is $C(B\times E):=\sum_{n=1}^\infty n\Proba(\xi(B)=n,\xi\in E)$.
If $A,B\in\BorelBounded$ with $\Distance{A}{B}\ge 1$ and $E\in\sigma(A\xi)$ with embedding $E'$ into $\sigma(\xi)$, then strong $1$-dependence~\eqref{eq_onedep_strong} allows to factorise
$
 C(B\times E')
 =\sum_{n=1}^\infty n\Proba(\xi(B)=n,A\xi\in E)
 =\Expect(\xi(B))\Proba(A\xi\in E)
$.
Hence, $M$-a.e., the Palm density on $\sigma(A\xi)$ simplifies to
$
 \Proba_x(\xi\in E)
 =\frac{dC(.\times E)}{d\Expect(\xi(\EnumSet{.}))}
 =\Proba(\xi\in E)
$.
As the $\sigma$-algebras of the form $\sigma(A\xi)$, for $A\in\BorelBounded$ with $A\cap\UnitSphere{x}=\emptyset$, generate $\sigma((X\setminus\UnitSphere{x})\xi)$, weak $1$-dependence follows.
\end{proof}

The inequality in Proposition~\ref{prop_fi} lies at the core of the dichotomy in Theorem~\ref{thm_dichotomy} and the minimality of the avoidance function of Shearer's PP in Theorem~\ref{thm_minimality}.
Proposition~\ref{prop_fe} shows that the $1$-hard-core of Shearer's PP makes it the only $1$-dependent PP to fulfil the inequality uniformly as an equality.
The inequality motivates the relaxation from strong to weak $1$-dependence.
Thus, $\OneDepPPM$ may be extended to be the class of weak $1$-dependent simple PPs with intensity measure $M$.

\begin{Prop}
\label{prop_fi}
For all $\xi\in\OneDepPPM$, $A\in\BorelUnitDiam$ and $B\in\BorelBounded$ with $\Proba(\xi(B)=0)>0$,
\begin{equation*}
 \frac{\Proba(\xi(A\cup B)=0)}{\Proba(\xi(B)=0)}
 \ge 1 - \int_{A\setminus B}
  \frac{\Proba(\xi(B\setminus\UnitSphere{x})=0)}{\Proba(\xi(B)=0)} M(\D{x})
 \,.
\end{equation*}
\end{Prop}

\begin{proof}
With $C$ the Campbell measure and the event $E:=\DescSet{\omega}{\xi(B)=0}$,
\begin{align*}
 \Proba(\xi(A\setminus B)\ge 1, \xi(B)=0)
 \le{}&\sum_{n=1}^\infty n\Proba(\xi(A\setminus B)=n, \xi\in E)
 \\={}& C((A\setminus B)\times E)
 \\={}&\int_{A\setminus B} \Proba_x(\xi(B)=0) M(\D{x})
 \\\le{}&\int_{A\setminus B} \Proba_x(\xi(B\setminus\UnitSphere{x})=0) M(\D{x})
 \\\stackrel{\eqref{eq_onedep_weak}}{=}{}&\int_{A\setminus B} \Proba(\xi(B\setminus\UnitSphere{x})=0) M(\D{x})
\end{align*}
and
\begin{align*}
 \Proba(\xi(A\cup B)=0)
 ={}& \Proba(\xi(B)=0) - \Proba(\xi(A\setminus B)\ge 1,\xi(B)=0)
 \\\ge{}& \Proba(\xi(B)=0)
  - \int_{A\setminus B} \Proba(\xi(B\setminus\UnitSphere{x})=0) M(\D{x})
 \,.\qedhere
\end{align*}
\end{proof}

\begin{Prop}
\label{prop_class_empty}
Let $M\in\MeasuresBounded$.
If $M$ has no atom of mass greater than one, then there exists a strong $0$-dependent PP with intensity measure $M$.
If $M$ has an atom of mass greater than one, then no simple PP with intensity measure $M$ exists.
\end{Prop}

\begin{proof}
For each $\xi\in\OneDepPPM$ and atom $x$ of $M$, $1\ge\Expect(\xi(\EnumSet{x}))= M(\EnumSet{x})=:m_x$.
Hence, an atom of mass greater than one contradicts simpleness of the PP.

\par
For the converse, let $M\in\MeasuresBounded$ without atoms greater than $1$.
Let $\Atoms$ and $\DiffuseDomain$ be the atomic and diffuse support domains of $M$ respectively.
Let $\Atoms_{=}$ and $\Atoms_{<}$ be the locations of atoms of mass equal to or less than one respectively.
Construct a measure $N$ with the same atomic and diffuse domains as follows.
On $\DiffuseDomain$, let $N\cap\DiffuseDomain:=M\cap\DiffuseDomain$.
On $\Atoms$, let $n_x:=N(\EnumSet{x}):=-\log(1-m_x)$, if $x\in\Atoms_{<}$, and $0$ else.
For $B\in\BorelBounded$, let $m_B:=\max\DescSet{m_x}{ x\in B\cap\Atoms_{<}}$.
As $M(B)<\infty$, so is $m_B<1$.
Also, $N(B\setminus\Atoms)=M(B\setminus\Atoms)<\infty$.
The inequality
\begin{equation}
\label{eq_log_inequality}
 \forall y\in[0,1[:
 \qquad
 -\log(1-y)
 =\sum_{n=1}^\infty \frac{y^n}{n}
 \le \sum_{n=1}^\infty y^n
 = \frac{y}{1-y}
\end{equation}
entails the bounded finiteness of $N$ on $\Atoms$ via
\begin{equation*}
 N(B\cap\Atoms)
 = -\sum_{x\in B\cap\Atoms_{<}} \log(1-m_x)
 \stackrel{\eqref{eq_log_inequality}}{\le}
   \sum_{x\in B\cap\Atoms_{<}} \frac{m_x}{1-m_x}
 \le \frac{M(B\cap\Atoms_{<})}{1-m_B}
 <\infty\,.
\end{equation*}

\par
The aim is to construct a strong $0$-dependent PP with intensity $M$.
Let $\varphi$ be the Poisson PP with intensity $N$.
It may not be simple because of atoms in $N$.
Let $\SupportPPOf{\varphi}$ be its simple support PP, collapsing multiple points of $\varphi$.
Let $\xi:=\SupportPPOf{\varphi}+\sum_{x\in\Atoms_{=}}\delta_x$.
The $0$-dependence of $\xi$ holds by construction and it remains to verify that $\xi$ has intensity $M$.
On the diffuse part of $M$, $\varphi$ is simple, whence the intensity of $\xi$ is $M$.
On $\Atoms_{=}$, $\varphi$ is almost-surely zero, but the atoms of mass one are present deterministically in $\xi$.
For all $x\in\Atoms_{<}$,
\begin{equation*}
 \Expect(\xi(\EnumSet{x}))
 = \Proba(\xi(\EnumSet{x})=1)
 = 1-\Proba(\varphi(\EnumSet{x})=0)
 = 1-e^{-n_x}
 = m_x
 \,.
\end{equation*}
Hence, for each $B\subseteq\BorelBounded$, the intensity measure is
\begin{equation*}
 \Expect(\xi(B\cap\Atoms_{<}))
 = \sum_{x\in B\cap\Atoms_{<}} m_x
 = M(B\cap\Atoms_{<})
 \,.\qedhere
\end{equation*}
\end{proof}

\section{Properties of the generating function}
\label{sec_gf_properties}

\par
Most properties of $\GfSymbol$ and $\GfRSymbol$ are trivial, if one knows that $\GfSymbol$ is the avoidance function of Shearer's PP.
The properties are needed to establish the existence of Shearer's PP first, though.
The equality in Proposition~\ref{prop_fe} (cf. the inequality in Proposition~\ref{prop_fi}) and monotonicity in Proposition~\ref{prop_monotonicity} are the most important ones.

\par
Let $\IntRange{n}:=\EnumSet{1,\dotsc,n}$, with $\IntRange{0}:=\emptyset$.
For a set $S$, let $S^n$ be the Cartesian product of $n$ copies of $S$, with $S^0:=\emptyset$.
Empty products evaluate to $1$ and empty sums to $0$.
For $B\in\BorelBounded$, $M\in\MeasuresBounded$ and $\lambda\in[0,\infty[$, let $\lambda M$ be the scaling of $M$ by the factor $\lambda$ and $M|_B$ the restriction of $M$ to $B$.

\subsection{Basic properties}
\label{sec_gf_basics}

\par
For $B\in\BorelBounded$ and $M\in\MeasuresBounded$, a basic bound is
\begin{equation}
\label{eq_int_bound}
 \int_{B^n} \IsHCTuple{n}(x_1,\dotsc,x_n)\prod_{i=1}^n M(\D{x_i})
 \le M(B)^n
 \,.
\end{equation}
Also, for each $B\in\BorelFinite$, $n\in\NonNegInts$ and $x_1,\dotsc,x_n\in B$,
\begin{equation}
\label{eq_hardcoreConfUPartBound}
 \IsHCTuple{n}(x_1,\dotsc,x_n) = 1 \Iff n\le\UPartNumOf{B}
 \,.
\end{equation}

\begin{Prop}
\label{prop_gf_basic_properties}
$\GfSymbol$ is well-defined and $1$-multiplicative as in Proposition~\ref{prop_strongOneMulti}.
\end{Prop}

\begin{proof}
For $k\in\NonNegInts$, the bound~\eqref{eq_int_bound} implies that
\begin{equation*}
 \BigModulus{
  \sum_{n=k}^\infty \frac{(-1)^n}{n!}
  \int_{B^n}\IsHCTuple{n}(x_1,\dotsc,x_n)\prod_{i=1}^n M(\D{x_i})
 }
 \le\sum_{n=k}^\infty \frac{M(B)^n}{n!}
 \,.
\end{equation*}
For $k=0$, this yields $\Modulus{\GfM{B}}\le\exp(M(B))$.
For $k\to\infty$, this shows the convergence of the series $\GfM{B}$.
\par
Let $A,B\in\BorelBounded$ with $\Distance{A}{B}\ge 1$.
The $1$-hard-core condition trivially holds for pairs in $A\times B$.
For $n,m\in\NonNegInts$, $x_1,\dotsc,x_n\in A$ and $y_1,\dotsc,y_m\in B$,
$\IsHCTuple{n+m}(x_1,\dotsc,x_n,y_1,\dotsc,y_m)
 = \IsHCTuple{n}(x_1,\dotsc,x_n)\IsHCTuple{m}(y_1,\dotsc,y_m)
$.
Hence, $1$-multiplicativity follows from
\begin{align*}
 \GfM{A\cup{}B}={}
 &\sum_{n=0}^\infty \frac{1}{n!}
  \int_{(A\cup{}B)^n} (-1)^n\IsHCTuple{n}(x_1,\dotsc,x_n)\prod_{i=1}^n M(\D{x_i})
 \\=
 &\sum_{n=0}^\infty \frac{1}{n!}
  \sum_{j=0}^n \binom{n}{j}
  \left(
  \int_{A^j} (-1)^j \IsHCTuple{j}(x_1,\dotsc,x_j) \prod_{i=1}^j M(\D{x_i})
  \right)
 \\&\qquad\qquad\qquad\times\left(
  \int_{B^{n-j}} (-1)^{n-j} \IsHCTuple{n-j}(x_1,\dotsc,x_{n-j})
   \prod_{i=1}^{n-j} M(\D{x_i})
  \right)
 \\={}
 &\left(
   \sum_{n=0}^\infty \frac{1}{n!}
   \int_{A^n} (-1)^n\IsHCTuple{n}(x_1,\dotsc,x_n) \prod_{i=1}^n M(\D{x_i})
  \right)
 \\&\times
  \left(
   \sum_{n=0}^\infty \frac{1}{n!}
   \int_{B^n} (-1)^n\IsHCTuple{n}(x_1,\dotsc,x_n) \prod_{i=1}^n M(\D{x_i})
  \right)
 \\={}&\GfM{A}\GfM{B}
 \,.\qedhere
\end{align*}
\par
\end{proof}

\begin{Prop}
\label{prop_fe}
For all $A\in\BorelUnitDiam$ and $B\in\BorelBounded$ with $\GfM{B}>0$,
\begin{equation}
\label{eq_fe}
 \GfMR{A}{B}
 = 1
 - \int_{A\setminus B}
    \GfMR{B\cap\UnitSphere{x}}{B\setminus\UnitSphere{x}}^{-1} M(\D{x})
 \,.
\end{equation}
\end{Prop}

\begin{proof}
By~\eqref{eq_hardcoreConfUPartBound}, at most one point of an $1$-hard-core configuration $C$ lies in $A$.
If $y\in A\cap C$, then the $1$-hard-core implies that $C\cap (B\setminus\UnitSphere{y})=\emptyset$.
This leads to
\begin{align*}
 \GfM{B}={}
 &\sum_{n=0}^\infty \frac{(-1)^n}{n!}
  \int_{B^n} \IsHCTuple{n}(x_1,\dotsc,x_n)\prod_{i=1}^n M(\D{x_i})
 \\={}
 &\sum_{n=0}^\infty \frac{(-1)^n}{n!}
  \left(
  \int_{(B\setminus A)^n} \IsHCTuple{n}(x_1,\dotsc,x_n)\prod_{i=1}^n M(\D{x_i})
  \right.
 \\&\qquad\qquad
  \left.
  +\,
  n\int_{A\times B^{n-1}} \IsHCTuple{n}(x_1,\dotsc,x_n)\prod_{i=1}^n M(\D{x_i})
  \right)
\end{align*}
A point in $A$ excludes the possibility of other points in $A$.
Thus,
\begin{align*}
 \GfM{B}={}
 &\sum_{n=0}^\infty \frac{(-1)^n}{n!}
  \int_{(B\setminus A)^n} \IsHCTuple{n}(x_1,\dotsc,x_n)\prod_{i=1}^n M(\D{x_i})
 \\
 &-\int_A \sum_{n=1}^\infty \frac{(-1)^{n-1}}{(n-1)!}
  \int_{B^{n-1}} \IsHCTuple{n}(x_1,\dotsc,x_{n-1},y)\prod_{i=1}^{n-1} M(\D{x_i})
  M(\D{y})
 \\={}
 &\GfM{B\setminus A}
  -\int_A
   \sum_{n=0}^\infty \frac{(-1)^n}{n!}
   \int_{(B\setminus\UnitSphere{y})^n} \IsHCTuple{n}(x_1,\dotsc,x_n)
   \prod_{i=1}^n M(\D{x_i})
  M(\D{y})
 \\={}
 &\GfM{B\setminus A} - \int_A \GfM{B\setminus\UnitSphere{y}} M(\D{y})
 \,.\qedhere
\end{align*}
\end{proof}

\subsection{Cluster expansion and monotonicity}
\label{sec_ce}

Cluster expansion is a series expansion of the logarithm of a generating series.
It is a classic technique from statistical mechanics~\cite[Section 4.4]{Ruelle__StatisticalMechanics_RigorousResults__Ben_1969}.

\begin{Prop}
\label{prop_ce}
Let $A,B\in\BorelFinite$ and $M\in\MeasuresBounded$ with $M|_{\Space\setminus(A\cup B)}=0$, i.e., it is concentrated on $A\cup B$.
The statement $\forall N\le M: N\in\MeasuresShearerStrict$ holds, if and only if
\begin{equation*}
 \log\GfR{A}{B}{M} = - \sum_{n=1}^\infty \frac{1}{n!}
  \int_{(A\cup B)^n\setminus B^n} \Penrose{x_1,\dotsc,x_n} \prod_{i=1}^n M(\D{x_i})
\end{equation*}
is a convergent series, with $\Penrose{x_1,\dotsc,x_n}\in\NonNegInts$ well-defined.
\end{Prop}

\begin{proof}
A cluster expansion of the partition function of a hard-sphere gas with radius one at negative fugacity~\cite{MiracleSole__OnTheTheoryOfClusterExpansions__MPRF_2010} with an application of Penrose's identity~\cite{Penrose__ConvergenceOfFugacityExpansionsForClassicalSystems__SMFA_1967} implies that the coefficients $\Penrose{x_1,\dotsc,x_n}$ have the desired properties.
\end{proof}

\ArxivOnly{
Some background about the coefficients $\Penrose{x_1,\dotsc,x_n}$.
Create a graph $G$ with vertices $\IntRange{n}$ and edges $(i,j)\in G$, if $\Distance{x_i}{x_j}<1$.
If $G$ is connected, then regard the term
\begin{equation*}
 \sum_{H\text{ spans }G} (-1)^{\Cardinality{E(H)}}\,.
\end{equation*}
Penrose shows that this counts the cardinality of a subset of the spanning trees of $G$, modulo a sign.
Let $\Penrose{x_1,\dotsc,x_n}$ be this count, if $G$ is connected, and $0$ otherwise.
This work only uses the non-negativity and finiteness of $\Penrose{x_1,\dotsc,x_n}$.}

\begin{Prop}
\label{prop_monotonicity}
As long as they are positive, the functions $\GfSymbol$ and $\GfRSymbol$ are monotone decreasing in both space and measure.
\end{Prop}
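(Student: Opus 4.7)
The plan is to reduce both monotonicities to the non-negativity of the Penrose coefficients $\Penrose{\vec{x}} \geq 0$ furnished by proposition~\ref{prop_ce}. First I would secure applicability of the cluster expansion: whenever $\GfM{B}>0$, the down-set property of proposition~\ref{prop_downset} supplies $N \in \MeasuresShearerExists{B}$ for every $N \leq M$, which is the precondition of proposition~\ref{prop_ce}. Hence
\begin{equation*}
  -\log\GfM{B} = \sum_{n=1}^\infty \frac{1}{n!} \int_{B^n} \Penrose{\vec{x}} \prod_{i=1}^n M(dx_i)
\end{equation*}
converges as a series of pointwise non-negative integrands.

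From this presentation the two monotonicities of $\GfSymbol$ become term-by-term comparisons. For \emph{measure monotonicity}, if $N \leq M$, then $\ProductMeasure{N}{n} \leq \ProductMeasure{M}{n}$ on $\SpaceX^n$, so each integral and hence the whole sum grows when passing from $N$ to $M$; exponentiating gives $\GfG{M}{B} \leq \GfG{N}{B}$. For \emph{spatial monotonicity}, $B \subseteq B'$ forces $B^n \subseteq (B')^n$, and combined with the non-negativity of $\Penrose{\vec{x}}$ yields $\GfG{M}{B'} \leq \GfG{M}{B}$. The same reasoning handles $\GfRSymbol$ via expansion~\eqref{eq_ce_gfr}: measure monotonicity is identical, and enlarging $A$ with $B$ fixed enlarges the integration domain $(A \cup B)^n \setminus B^n$. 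Any remaining cases I would reduce to the ratio identity $\GfMR{A}{B} = \GfG{M}{A \cup B}/\GfG{M}{B}$ together with the results for $\GfSymbol$ just proved.

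The main obstacle is not a single calculation but positivity bookkeeping: one must invoke proposition~\ref{prop_ce} at the larger endpoint of every comparison, so that endpoint must satisfy its hypothesis. Proposition~\ref{prop_downset} makes this automatic in the measure direction, and in the spatial direction positivity at the larger set is itself part of the ``as long as they are positive'' proviso.
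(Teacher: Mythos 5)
Your core argument is exactly the paper's: proposition~\ref{prop_ce} writes $-\log\GfM{B}$ as a sum of integrals of the non-negative Penrose coefficients $\Penrose{\vec{x}}$, so enlarging either the integration domain $B$ or the measure $M$ can only increase the sum, and exponentiating gives the desired monotonicities; $\GfRSymbol$ is handled the same way via~\eqref{eq_ce_gfr}. That much is fine and is essentially verbatim what the paper does.

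The problem is your positivity bookkeeping via proposition~\ref{prop_downset}. First, this is circular in the paper's logical architecture: proposition~\ref{prop_downset} is proved by thinning (model~\ref{mod_intrinsic_coupling}), which presupposes the existence of Shearer's PP (proposition~\ref{prop_spp_existence}), which in turn rests on complete monotonicity (proposition~\ref{prop_gf_diff_completeMonotonicity}), whose base case $n=1$ invokes the spatial monotonicity of $\GfSymbol$ from proposition~\ref{prop_monotonicity} — the very statement you are proving. Second, even setting aside circularity, the implication you state is not one proposition~\ref{prop_downset} gives: the down-set property takes $M\in\MeasuresShearerExists{B}$ as its hypothesis (i.e.\ $\GfM{A}\ge0$ for \emph{all} bounded Borel $A$ in $B$), whereas you start from the much weaker $\GfM{B}>0$ at a single set. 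The paper sidesteps both issues by not attempting any such derivation: the proviso ``as long as they are positive'' is read as exactly the precondition under which proposition~\ref{prop_ce} converges, and once that is granted the Penrose non-negativity argument is self-contained, with no appeal to downstream results about Shearer's PP.
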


\begin{proof}
As $-\log\GfR{A}{B}{M}$ is a sum over integrals over non-negative integrands, it is monotone increasing in both the integration domains and the measure.
Because $\GfM{\emptyset}=1$, the same holds for the cluster expansion of $\GfSymbol$.
\end{proof}

Monotonicity implies the following telescoping identity.
For all $A,B\in\BorelBounded$ with $\GfM{B}>0$ and every partition $\EnumSet{A_i}_{i=1}^{n}$ of $A\setminus B$ by elements of $\BorelBounded$,
\begin{equation}
\label{eq_telescope}
 \GfMR{A}{B}
 = \prod_{i=1}^{n}\GfMR{A_i}{B\sqcup\bigsqcup_{j=1}^{i-1} A_j}
 \,.
\end{equation}

\subsection{Continuity properties}
\label{sec_continuity}

This section investigates continuity properties of $\GfSymbol$ in both space and measure.

\begin{Prop}
\label{prop_continuity_functional}
For $B\in\BorelBounded$ and $M\in\MeasuresBounded$, the function $\FunctionalM{B}:[0,\infty[\to\Reals, \lambda\mapsto\GfG{B}{\lambda M}$ is continuous.
\end{Prop}

\begin{proof}
The scaling of $M$ by $\lambda$ implies that continuity of $\FunctionalM{B}$ at $1$ suffices.
This follows from the bound, for each $\varepsilon$ with $\Modulus{\varepsilon}<1$,
\begin{align*}
 &\Modulus{\GfG{B}{(1+\varepsilon)M}-\GfG{B}{M}}
 \\={}
 &\BigModulus{
  \sum_{n=0}^\infty \frac{(-1)^n}{n!}
   \int_{B^n} \IsHCTuple{n}(x_1,\dotsc,x_n) (1-(1-\varepsilon)^n)
   \prod_{i=1}^n M(\D{x_i})
  }
 \\\le{}
 &\sum_{n=1}^\infty \frac{1}{n!}
  \sum_{j=1}^n \binom{n}{j} \Modulus{\varepsilon}^j
  \int_{B^n} \prod_{i=1}^n M(\D{x_i})
 \\\stackrel{~\eqref{eq_int_bound}}{\le}{}
 &\sum_{n=0}^\infty \frac{1}{n!} 2^n \Modulus{\varepsilon} M(B)^n
 ={}
 \Modulus{\varepsilon} \exp(2M(B))
 \,.\qedhere
\end{align*}
\end{proof}

\begin{Prop}
\label{prop_root}
For $B\in\BorelBounded$, let $\RootM{B}:=\inf\DescSet{\lambda}{ \FunctionalM{B}(\lambda)< 0}$.
If $B\supseteq A\in\BorelBounded$, then $\RootM{B}\le\RootM{A}$.
If $M(B)>0$, then $\RootM{B}=\min\DescSet{\lambda}{\FunctionalM{B}(\lambda)=0}\in\,]0,\infty[$ and $\RootM{B}M|_B\in\MeasuresShearerExists$.
\end{Prop}

\begin{proof}
\par
First prove the contra-variance of $\RootM{B}$.
Let $\Lambda<\lambda_B$.
Proposition~\ref{prop_ce} implies that $\FunctionalM{B}(\lambda)\le\FunctionalM{A}(\lambda)$, for all $\lambda\le\Lambda$.
Hence, $\RootM{A}\ge\RootM{B}$.

\par
If $M(B)>0$, then there is $B\supseteq A\in\BorelUnitDiam$ with $M(A)>0$.
Because $\FunctionalM{A}(\lambda)=1-\lambda M(A)$, $\RootM{A}=1/M(A)$.
Contra-variance yields $\RootM{B}\le\RootM{A}=1/M(A)<\infty$.
The continuity of $\FunctionalM{B}$ from Proposition~\ref{prop_continuity_functional} implies together with $\FunctionalM{B}(0)=\GfM{\emptyset}=1$ that $\RootM{B}>0$.

\par
The continuity of $\FunctionalM{B}$ renders the infimum a minimum.
Contra-variance implies that for all $\lambda\le\RootM{B}$ and $B\supseteq A\in\BorelBounded$, $\FunctionalM{A}(\lambda)\ge 0$.
Proposition~\eqref{prop_ce} implies that $\RootM{B}M|_B\in\MeasuresShearerExists$.
\end{proof}

\begin{Prop}
\label{prop_continuity_space}
Let $B\in\BorelBounded$ be a continuity set of $M$, i.e., $M(\ClosureOf{B}\setminus B)=0$, where $\ClosureOf{B}$ is the closure of $B$.
For each  sequence $(B_n)_{n\in\Naturals}$ in $\BorelBounded$ decreasing to $B$ with $M(B_1)>0$, the limit $\GfM{B_n}\xrightarrow[n\to\infty]{}\GfM{B}$ holds.
\end{Prop}

\begin{proof}
For $A,B\in\BorelBounded$ with $A\subseteq B$ and $M(B)>0$, bound the difference as
\begin{align*}
 \Modulus{\GfM{B}-\GfM{A}} ={}
 &\BigModulus{
  \sum_{n=0}^\infty
   \frac{(-1)^n}{n!}
   \int_{B^n\setminus A^n}
    \IsHCTuple{n}(x_1,\dotsc,x_n)\prod_{i=1}^n M(\D{x_i})
  }
 \\\stackrel{\eqref{eq_int_bound}}{\le}{}
 &\sum_{n=1}^\infty \frac{1}{n!} (M(B)^n-M(A)^n)
 \\={}
 &\sum_{n=1}^\infty \frac{1}{n!}
  \sum_{j=1}^n \binom{n}{j} M(B\setminus A)^{j} M(A)^{n-j}
 \\\le{}
 &\sum_{n=1}^\infty \frac{1}{n!}
  M(B\setminus A) 2^n \max\EnumSet{1,M(B)}^{n-1}
 \\\le{}
 & 2M(B\setminus A)\exp(\max\EnumSet{1,M(B)})
 \,.
\end{align*}
Thus, $\Modulus{\GfM{B_n}-\GfM{B}}
 \le 2 M(B_n\setminus B)\exp(\max\EnumSet{1,M(B_1)})
 \xrightarrow[n\to\infty]{\text{continuity of $M$ at $B$}}0$.
\end{proof}

\subsection{Behaviour under the set difference operator}
\label{sec_gf_diffop}

\par
The \emph{set difference operator} $\DiffOpSymbol$ at $A\in\BorelAll$ transforms a set function $\phi:\BorelAll\to\Reals$ into $\DiffOpSymbol(A)\phi:\BorelAll\to\Reals\quad B\mapsto \phi(B)-\phi(B\cup A)$.

\begin{Lem}
\label{lem_diffop_iterated_canonical}
For all $n\in\Naturals$ and $A_1,\dotsc,A_n,B\in\BorelAll$ and $\phi:\BorelAll\to\Reals$, iterated application of difference operators commutes and has the canonical form
\begin{equation}
\label{eq_diffop_iterated_canonical}
 \DiffOpSymbol(\EnumSet{A_1,\dotsc,A_n}):=
 \DiffOpSymbol(A_1)(\dotso(\DiffOpSymbol(A_n)\phi)\dotso)(B)
 =
 \sum_{I\subseteq\IntRange{n}}
  (-1)^{\Cardinality{I}}
  \phi(B\cup\bigcup_{i\in I} A_i)
 \,.
\end{equation}
In particular, $\DiffOpSymbol(\EnumSet{A})=\DiffOpSymbol(A)$ and $\DiffOpSymbol(\emptyset)$ is the identity.
\end{Lem}

\ArxivOnly{
\begin{proof}
Commutativity follows from the canonical form for $n=2$.
For $n=1$,
\begin{equation*}
 \DiffOpSymbol(\EnumSet{A})\phi(B)
 = \DiffOpSymbol(A)\phi(B)
 =\phi(B)-\phi(B\cup A)\,.
\end{equation*}
Proceed by induction over $n$.
The induction step from $n$ to $n+1$ is
\begin{align*}
 &\DiffOpSymbol(\EnumSet{A_1,\dotsc,A_{n+1}})\phi(B)
 \\={}& \DiffOpSymbol(A_{n+1})(\DiffOpSymbol(\EnumSet{A_1,\dotsc,A_n})\phi)(B)
 \\={}&(\DiffOpSymbol(\EnumSet{A_1,\dotsc,A_n})\phi)(B)
  - (\DiffOpSymbol(\EnumSet{A_1,\dotsc,A_n})\phi)(B\cup A_{n+1})
 \\={}&\sum_{I\subseteq\IntRange{n}} (-1)^{\Cardinality{I}}
  \phi(B\cup\bigcup_{i\in I} A_i)
  -\phi(B\cup\bigcup_{i\in I} A_i \cup A_{n+1})
 \\={}& \sum_{I\subseteq\IntRange{n+1}} (-1)^{\Cardinality{I}}
  \phi(B\cup\bigcup_{i\in I} A_i)
 \,.\qedhere
\end{align*}
\end{proof}
}

\begin{Lem}
\label{lem_diffop_resummation}
Let $\EnumSet{A_i}_{i=1}^n$ be disjoint Borel sets.
For $I\subseteq\IntRange{n}$, let $A_I:=\bigsqcup_{i\in I} A_i$.
For each $\phi:\BorelAll\to\Reals$, one has
$\sum_{I\subseteq\IntRange{n}}
 \DiffOpSymbol(\EnumSet{A_i}_{i\in I})\phi(A_{\IntRange{n}\setminus I})
 =\phi(\emptyset)$.
\end{Lem}

\ArxivOnly{
\begin{proof}
Abbreviate $\bar{\phi}(I):=\phi(A_I)$.
As $A_\emptyset=\emptyset$, so is $\bar{\phi}(\emptyset)=\phi(A_\emptyset)=\phi(\emptyset)$.
\begin{align*}
 \sum_{I\subseteq\IntRange{n}}
  \DiffOpSymbol(\EnumSet{A_i}_{i\in I})\bar{\phi}(\IntRange{n}\setminus I)
 ={}&\sum_{I\subseteq\IntRange{n}}
  \sum_{J\subseteq I}
   (-1)^{\Cardinality{J}} \bar{\phi}((\IntRange{n}\setminus I)\sqcup J)
 \\={}&\sum_{I\subseteq\IntRange{n}}
  \sum_{J\subseteq I}
   (-1)^{\Cardinality{J}+\Cardinality{I}} \bar{\phi}(\IntRange{n}\setminus J)
 \\={}&\sum_{J\subseteq\IntRange{n}}
  (-1)^{\Cardinality{J}} \bar{\phi}(\IntRange{n}\setminus J)
  \sum_{J\subseteq I\subseteq\IntRange{n}} (-1)^{\Cardinality{I}}
 \\={}&\sum_{J=\IntRange{n}} (-1)^{\Cardinality{J}}
  \bar{\phi}(\IntRange{n}\setminus J) (-1)^{\Cardinality{J}}
 \\={}& \bar{\phi}(\emptyset)
 \,.\qedhere
\end{align*}
\end{proof}
}

\begin{Prop}
\label{prop_gf_diff_completeMonotonicity}
If $M\in\MeasuresShearerExists$, then $\GfSymbol$ is \emph{completely monotone}, i.e.,
for each $n\in\NonNegInts$ and all $A_1,\dotsc,A_n,B\in\BorelFinite$, the iterated difference $\DiffOpSymbol(\EnumSet{A_i}_{i=1}^n)\GfM{B}$ is non-negative.
\end{Prop}

\begin{proof}
Proceed by induction over $n$.
For $n=0$, $M\in\MeasuresShearerExists$ implies that $\DiffOpSymbol(\emptyset)\GfM{B} = \GfM{B}\ge 0$.
For $n=1$, the monotonicity in space of $\GfSymbol$ from Proposition~\ref{prop_monotonicity} implies that
$\DiffOpSymbol(\EnumSet{A_1})\GfM{B}
 = \DiffOpSymbol(A_1)\GfM{B}
 = \GfM{B}-\GfM{B\cup A}
 \ge 0$.
The induction step from $n$ to $n+1$ needs more preparation.
For $I\subseteq\IntRange{n}$, let $A_I:=\bigcup_{i\in I} A_i$.
For $x\in A_{n+1}$, let $B^x:=B\setminus\UnitSphere{x}$ and, for $i\in\IntRange{n+1}$, let $A^x_i:=A_i\setminus\UnitSphere{x}$.
In particular, $A^x_{n+1}=A_{n+1}\setminus\UnitSphere{x}=\emptyset$.
As $\DiffOpSymbol(\emptyset)\phi = \phi$, the degree of the iterated difference decreases in
\begin{align*}
 &\DiffOpSymbol(\EnumSet{A_i}_{i=1}^{n+1})\GfM{B}
 \\={}&\DiffOpSymbol(A_{n+1})(\DiffOpSymbol(\EnumSet{A_i}_{i=1}^n)\GfM{B})
 \\={}&\DiffOpSymbol(\EnumSet{A_i}_{i=1}^n)\GfM{B}
  - \DiffOpSymbol(\EnumSet{A_i}_{i=1}^n)\GfM{B\cup A_{n+1}}
 \\\stackrel{\eqref{eq_diffop_iterated_canonical}}{=}{}&\sum_{I\subseteq\IntRange{n}}
  (-1)^{\Cardinality{I}} \GfM{B\cup A_I}
  -
  \sum_{I\subseteq\IntRange{n}}
  (-1)^{\Cardinality{I}} \GfM{B\cup A_I\cup A_{n+1}}
 \\\stackrel{\eqref{eq_fe}}{=}{}& \sum_{I\subseteq\IntRange{n}} (-1)^{\Cardinality{I}}
  \left(
   \GfM{B\cup A_I}
  -\underbrace{\GfM{B\cup A_I\cup A_{n+1}}}
  \right)
 \\={}& \sum_{I\subseteq\IntRange{n}} (-1)^{\Cardinality{I}}
  \left(
   \GfM{B\cup A_I}
   - \overbrace{\GfM{B\cup A_I}
   + \int_{A_{n+1}} \GfM{(B\cup A_I)\setminus\UnitSphere{x}} M(\D{x})}
  \right)
 \\={}& \sum_{I\subseteq\IntRange{n}} (-1)^{\Cardinality{I}}
  \int_{A_{n+1}} \GfM{B^x\cup A^x_I} M(\D{x})
 \\={}&\int_{A_{n+1}}
  \underbrace{
   \DiffOpSymbol(\EnumSet{A^x_i}_{i=1}^n)\GfM{B^x}
  }_{\ge 0\text{ by the induction hypothesis}}
  M(\D{x})
  \ge 0
 \,.\qedhere
\end{align*}
\end{proof}

\begin{Prop}
\label{prop_gf_diff_zeroesOut}
Let $A_1,\dotsc,A_n,B\in\BorelFinite$ be disjoint.
If $\UPartSymbol\left(B\sqcup\bigsqcup_{i=1}^n A_i\right) < n$, then $\DiffOpSymbol(\EnumSet{A_i}_{i=1}^n)\GfM{B} = 0$.
\end{Prop}

\begin{proof}
For $I\subseteq\IntRange{n}$, let $A_I:=\bigsqcup_{i\in I} A_i$.
Let $A:=A_{\IntRange{n}}$.

\begin{align*}
 &\DiffOpSymbol(\EnumSet{A_i}_{i=1}^n)\GfM{B}
 \\={}
 &\sum_{I\subseteq\IntRange{n}} (-1)^{\Cardinality{I}} \GfM{B\sqcup A_I}
 \\={}
 &\sum_{I\subseteq\IntRange{n}} (-1)^{\Cardinality{I}}
  \sum_{m=0}^\infty \frac{(-1)^m}{m!}
  \int_{(B\sqcup A_I)^m} \IsHCTuple{m}(x_1,\dotsc,x_m)\prod_{i=1}^m M(\D{x_i})
\intertext{
For $m\in\NonNegInts$ and $x_1,\dotsc,x_m\in A\sqcup B$, regard the indices $I(x_1,\dotsc,x_m):=\DescSet{i\in\IntRange{n}}{ \EnumSet{x_1,\dotsc,x_m}\cap A_i\not=\emptyset}$ of the partition elements containing the points.
}
 ={}
 &\sum_{m=0}^\infty \frac{(-1)^m}{m!}
  \int_{(B\sqcup A_I)^m} \IsHCTuple{m}(x_1,\dotsc,x_m)
   \sum_{I(x_1,\dotsc,x_m)\subseteq I\subseteq\IntRange{n}}
    (-1)^{\Cardinality{I}}
   \prod_{i=1}^m M(\D{x_i})
 \\={}
 &\sum_{m=0}^\infty \frac{(-1)^m}{m!}
  \int_{(B\sqcup A_I)^m}
   \IsHCTuple{m}(x_1,\dotsc,x_m)
   \sum_{I(x_1,\dotsc,x_m)=\IntRange{n}} (-1)^n
   \prod_{i=1}^m M(\D{x_i})
\end{align*}
Using~\eqref{eq_hardcoreConfUPartBound} yields $n\le m\le \UPartNumOf{A\sqcup B} < n$ and a zero integrand.
\end{proof}

\begin{Prop}
\label{prop_gf_diff_sumOne}
Let $\EnumSet{A_i}_{i=1}^n$ be disjoint elements of $\BorelUnitDiam$.
For $I\in\IntRange{n}$, let $A_I:=\bigsqcup_{i\in I} A_i$.
For all $r\ge\UPartNumOf{A_{\IntRange{n}}}$,
$\displaystyle
 \sum_{I\subseteq\IntRange{n},\Cardinality{I}\le r}
 \DiffOpSymbol(\EnumSet{A_i}_{i\in I})\GfM{A_{\IntRange{n}\setminus I}} = 1$ holds.
\end{Prop}

\begin{proof}
Fill up with Proposition~\ref{prop_gf_diff_zeroesOut} and evaluate the sum with Lemma~\ref{lem_diffop_resummation}.
\end{proof}

\section{Proofs around Shearer's point process}
\label{sec_spp_proofs}

Section~\ref{sec_spp_char_uniq_exist} contains the uniqueness and characterisation in Proposition~\ref{prop_spp_characterisation_uniqueness} and the existence in Proposition~\ref{prop_spp_existence}.
Together they imply Theorem~\ref{thm_spp}.
Section~\ref{sec_spp_min} proves the minimality of Shearer's PP for the avoidance function in Theorem~\ref{thm_minimality}.
Section~\ref{sec_monotonicity} discusses intrinsic couplings between Shearer's PP at different intensities and the monotonicity properties of the sets of measures.
Section~\ref{sec_spp_different} shows that Shearer's PP law differs from well known hard-core or $1$-dependent PP laws and references probabilistic constructions.
The notation of Section~\ref{sec_gf_properties} applies.

\par
Besides the $\sigma$-algebra of all Borel sets $\BorelAll$, there are also the algebras of bounded, $\UPartSymbol$-finite and less than unit-diameter Borel sets $\BorelBounded$, $\BorelFinite$ and $\BorelUnitDiam$ respectively.
$\BorelFinite$ is an algebra of $\BorelUnitDiam$.
Both algebras $\BorelBounded$ and $\BorelFinite$ generate the $\sigma$-algebra $\BorelAll$.
The distinction between $\BorelFinite$ and $\BorelBounded$ plays a key role, as key proofs employ induction over $\UPartSymbol$.
The strategy is to first establish results on $\BorelFinite$, extend them by $\sigma$-finiteness to $\BorelAll$ and and project them onto $\BorelBounded$.
Sufficient conditions for $\BorelFinite$ and $\BorelBounded$ to coincide are that $(\Space,\Metric)$ is either $\sigma$-compact or total (bounded sets are pre-compact).
The structure of the space below the distance $1$ plays no role.

\subsection{Characterisation, uniqueness and existence}
\label{sec_spp_char_uniq_exist}

\par
For $a,n\in\NonNegInts$, let the \emph{falling factorial} be $\Pochhammer{a}{n} := \prod_{i\in\IntRange{n}} (a-i+1)$.
The \emph{factorial moment measure} of a PP $\xi$ of order $n$ on $B\in\BorelAll$ is $\Expect(\Pochhammer{\xi(B)}{n})$~\cite[Section 9.5]{Daley_VereJones__AnIntroductionToTheTheoryOfPointProcesses_II__Springer_2008}.

\begin{Prop}
\label{prop_spp_characterisation_uniqueness}
If there exists a strong $1$-dependent and $1$-hard-core PP $\ShearersPPM$ with intensity measure $M\in\MeasuresBounded$, then its factorial moment measure of order $n$ at $B\in\BorelBounded$ fulfils
$\displaystyle
 \Expect(\Pochhammer{\ShearersPPM(B)}{n})
 = \int_{B^n} \IsHCTuple{n}(x_1,\dotsc,x_n) \prod_{i=1}^n M(\D{x_i})
$ and its avoidance function is $\GfSymbol$.
There is at most one PP with these three properties in $\OneDepPPM$.
\end{Prop}

\begin{proof}
\par
If $\ShearersPPM$ has finite factorial moment measures of all orders, then its avoidance function is
$\displaystyle
 \Proba(\ShearersPPM(B)=0)
 = \sum_{n=0}^\infty \frac{(-1)^n}{n!} \Expect(\Pochhammer{\xi(B)}{n})
 = \GfM{B}$~\cite[(5.4.10)]{Daley_VereJones__AnIntroductionToTheTheoryOfPointProcesses_I__Springer_2003}.
Because the avoidance function determines the PP's law~\cite[9.2.XIII]{Daley_VereJones__AnIntroductionToTheTheoryOfPointProcesses_II__Springer_2008}, uniqueness follows.

\par
For all $r,n_1,\dotsc,n_r\in\NonNegInts$ with $n:= \sum_{i=1}^r n_i$ and disjoint $A_1,\dotsc,A_r\in\BorelUnitDiam$, show that
\begin{equation}
\label{eq_spp_factor_block}
 \Expect\left(\prod_{i=1}^r \Pochhammer{\ShearersPPM(A_i)}{n_i}\right)
 = \int_{\prod_{i=1}^r A_i^{n_i}}
  \IsHCTuple{n}(x_1,\dotsc,x_n)
  \prod_{l=1}^{n} M(\D{x_l})
 \,.
\end{equation}
The $1$-hard-core of $\ShearersPPM$ and~\eqref{eq_hardcoreConfUPartBound} imply that $\Pochhammer{\ShearersPPM(A_i)}{n_i} = 1$, if both $\ShearersPPM(A_i)=1$ and $n_i=1$, and $0$ else.
Suppose that there is an $j\in\IntRange{r}$ with $n_j\ge 2$.
On the one hand, the $j$-th factor of the lhs of~\eqref{eq_spp_factor_block} equals $0$, whence the lhs of~\eqref{eq_spp_factor_block} equals zero.
On the other hand, this gives an upper bound on the rhs of~\eqref{eq_spp_factor_block} of
\begin{equation*}
 \left(\prod_{j\not=i\in\IntRange{r}} M(A_i)^{n_i}\right)
 \int_{A_j^{n_j}}
  \underbrace{\IsHCTuple{n_j}(x_1,\dotsc,x_{n_j})}_{=0}
  \prod_{l=1}^{n_j} M(\D{x_l})
 = 0
 \,.
\end{equation*}
The remaining case has $r=n$ and all $n_i=1$.
Proceed by induction over $n$.
If $n=1$ and $A\in\BorelUnitDiam$, then
$
 \Expect(\ShearersPPM(A))
 =\Proba(\ShearersPPM(A)=1)=M(A)
 =\int_{A}\IsHCTuple{1}(x)M(\D{x})
$.
The induction step from $(n-1)$ to $n$ is
\begin{align*}
 \Expect\left(\prod_{i=1}^n \ShearersPPM(A_i)\right)
 \stackrel{\eqref{eq_hardcoreConfUPartBound}}{=}{}
 &\Proba(\forall i\in\IntRange{n}:\ShearersPPM(A_i)=1)
 \\={}
 &\int_{A_n}
  \Proba_{x_n}(\forall i\in\IntRange{n-1}:\ShearersPPM(A_i)=1) M(\D{x_n})
 \\\stackrel{\eqref{eq_onedep_weak}}{=}{}
 &\int_{A_n} \Proba(
   \forall i\in\IntRange{n-1}:\ShearersPPM(A_i\setminus\UnitSphere{x_n})=1
  ) M(\D{x_n})
 \\={}
 &\int_{A_n} \left(
   \int_{\prod_{i=1}^{n-1} (A_i\setminus\UnitSphere{x_n})}
   \IsHCTuple{n-1}(x_1,\dotsc,x_{n-1}) \prod_{i=1}^{n-1} M(\D{x_i})
  \right)
  M(\D{x_n})
 \\={}
 &\int_{A_n} \left(
   \int_{\prod_{i=1}^{n-1} A_i}
   \IsHCTuple{n}(x_1,\dotsc,x_{n}) \prod_{i=1}^{n-1} M(\D{x_i})
  \right)
  M(\D{x_n})
 \\={}
 &\int_{\prod_{i=1}^{n} A_i}
   \IsHCTuple{n}(x_1,\dotsc,x_{n})
   \prod_{i=1}^{n} M(\D{x_i})
 \,.
\end{align*}

Let $k:=\UPartNumOf{B}$ and $\EnumSet{A_i}_{i=1}^k$ be a, possibly countable, partition of $B$ into elements of $\BorelUnitDiam$.
If $k=\infty$, then $\IntRange{k}=\Naturals$.
Let $\MNCVISets:=\DescSet{(n_1,\dotsc,n_k)\in\NonNegInts^k}{\sum_{i=1}^k n_i = n}$.
For $a\in\Reals^k$, the multinomial Chu-Vandermonde identity is
\begin{equation*}
 \Pochhammer{\left(\sum_{i\in\IntRange{k}} a_i\right)}{n}
 = \sum_{(n_1,\dotsc,n_k)\in\MNCVISets}
   \binom{n}{n_1,\dotsc,n_k}
   \prod_{i=1}^k \Pochhammer{a_i}{n_i}
 \,.
\end{equation*}
The multinomial Chu-Vandermonde identity together with~\eqref{eq_spp_factor_block} yields
\begin{align*}
 \Expect(\Pochhammer{\ShearersPPM(B)}{n})
 ={}
 &\Expect\left(
   \sum_{(n_1,\dotsc,n_k)\in\MNCVISets}
    \binom{n}{n_1,\dotsc,n_k} \prod_{i=1}^k \Pochhammer{\ShearersPPM(A_i)}{n_i}
  \right)
 \\={}
 &\sum_{(n_1,\dotsc,n_k)\in\MNCVISets}
   \binom{n}{n_1,\dotsc,n_k}
   \Expect\left(
    \prod_{i=1}^k \Pochhammer{\ShearersPPM(A_i)}{n_i}
   \right)
 \\={}
 &\sum_{(n_1,\dotsc,n_k)\in\MNCVISets}
   \binom{n}{n_1,\dotsc,n_k}
   \int_{\prod_{i=1}^k A_i^{n_i}}
    \IsHCTuple{n}(x_1,\dotsc,x_{n})
    \prod_{l=1}^{n} M(\D{x_l})
 \\={}
 &\int_{B^n}
   \IsHCTuple{n}(x_1,\dotsc,x_{n})
   \prod_{l=1}^{n} M(\D{x_l})
 \,.\qedhere
\end{align*}
\end{proof}

\begin{Prop}
\label{prop_spp_existence}
If $M\in\MeasuresShearerExists$, then a strong $1$-dependent and $1$-hard-core PP with intensity $M$ exists.
\end{Prop}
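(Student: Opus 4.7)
The plan is to construct $\ShearersPPM$ directly from its intended avoidance function $\GfSymbol$ and then verify the remaining properties one by one. First I would invoke the classical characterisation of the law of a simple PP by its avoidance function~\cite[9.2.XIII]{Daley_VereJones__AnIntroductionToTheTheoryOfPointProcesses_II__Springer_2008}: it suffices to show that $\GfM{\cdot}$ takes values in $[0,1]$, satisfies $\GfM{\emptyset}=1$, is completely monotone, and is continuous from above for decreasing sequences of bounded Borel sets. Of these, normalisation is Proposition~\ref{prop_gf_emptyOne}, non-negativity is the standing hypothesis $M\in\MeasuresShearerExistsX$, complete monotonicity is Proposition~\ref{prop_gf_diff_completeMonotonicity}, and continuity from above is Proposition~\ref{prop_continuity_space}. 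The upper bound $\GfM{B}\le 1$ is automatic once complete monotonicity is in hand, since $\GfM{\emptyset}-\GfM{B}=\DiffOpSymbol(B)\GfM{\emptyset}\ge 0$. This yields a unique simple PP $\ShearersPPM$ whose avoidance function is $\GfSymbol$.

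Strong $1$-dependence of $\ShearersPPM$ follows immediately by combining the $1$-multiplicativity of $\GfSymbol$ from Proposition~\ref{prop_gf_oneMult} with the equivalence in Proposition~\ref{prop_strongOneIndep_oneMultipAvoidance}. What remains is to identify the intensity measure as $M$ and to prove that $\ShearersPPM$ is $1$-hard-core. I would do both by computing the first two factorial moment measures of $\ShearersPPM$ from its avoidance function via inclusion--exclusion over a dissecting system in $\BorelXUnitDiam$. The key point is that on a unit-diameter set the defining series for $\GfSymbol$ truncates after $n=1$ by~\eqref{eq_finitelyOnePartitionedFiniteOneHardCoreConfig}, so $\GfM{A}=1-M(A)$ for $A\in\BorelXUnitDiam$, and on a union $A_1\cup A_2$ of two such disjoint sets it truncates after $n=2$, the surviving second-order term being $\int_{A_1\times A_2}\IsOneHardCore{\Set{x,y}}\,\ProductMeasure{M}{2}(d(x,y))$. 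Inclusion--exclusion against $\GfM{A_1}$ and $\GfM{A_2}$ then identifies the second factorial moment measure on products of $\BorelXUnitDiam$ cells as $\IsOneHardCore{\Set{x,y}}\,\ProductMeasure{M}{2}$, and the first as $M$.

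Once these moment computations are in place the conclusion follows easily: the expected number of ordered pairs of distinct points at distance less than one equals $\int\Iverson{\MetricX{x}{y}<1}\IsOneHardCore{\Set{x,y}}\,\ProductMeasure{M}{2}(d(x,y))=0$, so $\ShearersPPM$ is almost surely $1$-hard-core; and $\Expect(\ShearersPPM(A))=M(A)$ on $\BorelXUnitDiam$ extends by additivity to the whole of $\BorelX$. I expect the main obstacle to be the careful bookkeeping in the third step, in particular ensuring that the truncation arguments on $\BorelXUnitDiam$ combine correctly under a dissecting system to yield factorial moment measures on arbitrary bounded Borel sets. Everything else is a direct appeal to the preceding propositions.
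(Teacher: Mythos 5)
You have the right skeleton and cite the correct auxiliary propositions for most of the verification, but there is one genuine gap in the first step and a genuinely different route in the hard-core step that is worth flagging.

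The gap: the Daley--Vere-Jones sufficient conditions for a set function to be the avoidance function of a simple PP (the paper invokes the existence statement [9.2.XV], not the uniqueness statement [9.2.XIII] that you cite) include a fourth ingredient that you omit: a condition guaranteeing that the resulting process is almost surely boundedly finite. Complete monotonicity, normalisation and continuity from above at $\emptyset$ produce, via a Choquet-type argument, only a random closed set; one must separately rule out the event that some bounded Borel set contains infinitely many points. The paper handles this by taking a dissecting system $\Set{A_{i,n}}_{i\in\IntegerSet{k_n},n}$ of $B$ by sets in $\BorelXUnitDiam$ and showing that the zero-one statistics
\begin{equation*}
 F(n,r):=\sum_{I\in\binom{\IntegerSet{k_n}}{\IntegerSet{r}}}
  \DiffOpSymbol(\Set{A_{i,n}}_{i\in I})\GfM{B\setminus\biguplus_{i\in I} A_{i,n}}
\end{equation*}
equal $1$ for $n\ge r\ge\UPartNumOf{B}$, which is exactly Proposition~\ref{prop_gf_diff_sumOne}; hence $\lim_{r\to\infty}\lim_{n\to\infty}F(n,r)=1$. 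Without this step you have not yet produced a point process, and the subsequent moment computations are not justified.

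For the $1$-hard-core and the intensity your factorial-moment route is sound in outline but genuinely different from the paper's. The paper matches the avoidance function of $\ClosureOf{A}\ShearersPPM$, for $A\in\BorelXUnitDiam$, with that of an explicit auxiliary PP $\varphi_A$ that places either zero or one point in $\ClosureOf{A}$, concludes that these two PPs have the same law, and then rules out close pairs by a countable union bound over unit-sphere intersections built from a countable dense subset of $\SpaceX$. Your version instead identifies the second factorial moment measure off the diagonal from the avoidance function by inclusion--exclusion over a dissecting system, using that the series for $\GfM{\cdot}$ truncates on sets with small $\UPartNumOf{\cdot}$ by~\eqref{eq_finitelyOnePartitionedFiniteOneHardCoreConfig}. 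This is conceptually tidy but needs more bookkeeping than you indicate: you must invoke the Khinchin-type limit theorem identifying the second factorial moment measure from dissection sums of $\Proba(\ShearersPPM(A_i)\ge 1,\ShearersPPM(A_j)\ge 1)$, and take some care near the diagonal when $M$ has atoms. There is also an ordering constraint you gloss over: to turn $\Proba(\ShearersPPM(A)\ge 1)=M(A)$ into $\Expect(\ShearersPPM(A))=M(A)$ on $A\in\BorelXUnitDiam$ you already need $\ShearersPPM(A)\in\Set{0,1}$, so the hard-core identification has to precede the intensity computation. With these points repaired, the rest of your argument matches the paper's use of Propositions~\ref{prop_gf_emptyOne}, \ref{prop_gf_diff_completeMonotonicity}, \ref{prop_continuity_space}, \ref{prop_gf_oneMult} and~\ref{prop_strongOneIndep_oneMultipAvoidance}.
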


\begin{proof}
Four sufficient conditions~\cite[9.2.XV]{Daley_VereJones__AnIntroductionToTheTheoryOfPointProcesses_II__Springer_2008} jointly guarantee the existence of a simple PP with avoidance function $\GfSymbol$.
The conditions are
\begin{enumerate}
\item
Proposition~\ref{prop_gf_diff_completeMonotonicity} implies the complete monotonicity of $\GfSymbol$.
\item
Unit at $\emptyset$, i.e., $\GfM{\emptyset}=1$, holds trivially.
\item
Continuity in space at $\emptyset$ follows from Proposition~\ref{prop_continuity_space} combined with the fact that $\emptyset$ is a continuity set of $M$, as $M(\ClosureOf{\emptyset}\setminus\emptyset)=M(\emptyset)=0$.
\item
Almost-sure bounded finiteness of the PP.
Let $\EnumSet{A_{i,n}}_{i\in\IntRange{k_n},n\in\Naturals}$ be a \emph{dissecting system}~\cite[Prop A2.1.V]{Daley_VereJones__AnIntroductionToTheTheoryOfPointProcesses_I__Springer_2003} of $B\in\BorelFinite$.
By intersecting every partition of the dissecting system with a fixed $\BorelUnitDiam$-partition of $B$, one may assume that the dissecting systems contains only finite $\BorelUnitDiam$-partitions.
Let
\begin{equation*}
 F(n,r):=\sum_{I\subseteq\IntRange{k_n},\Cardinality{I}\le r}
  \DiffOpSymbol(\EnumSet{A_{i,n}}_{i\in I})
  \GfM{B\setminus\bigsqcup_{i\in I} A_{i,n}}
  \,.
\end{equation*}
By Proposition~\ref{prop_gf_diff_sumOne}, $F(n,r)=1$ for $n\ge r\ge\UPartNumOf{B}$.
Hence, $\displaystyle\lim_{r\to\infty}\lim_{n\to\infty} F(n,r) = 1$.
\end{enumerate}

\par
Recall that the algebra $\BorelFinite$ generates the $\sigma$-algebra $\BorelAll$.
Thus, there exists a simple PP $\ShearersPPM$ on $\Space$ with avoidance function $\GfSymbol$ on $\BorelFinite$.
It rests to show, that the PP $\ShearersPPM$ is simple, strong $1$-dependent, $1$-hard-core and has intensity measure $M$.
The characterisation in Proposition~\ref{prop_spp_characterisation_uniqueness} shows that there is a unique extension of its law to all of $\BorelBounded$.
\par
Simpleness follows from the $1$-hard-core.
By Proposition~\ref{prop_gf_basic_properties}, the function $\GfSymbol$ is $1$-multiplicative.
Proposition~\ref{prop_strongOneMulti} asserts strong $1$-dependence.
\par
Let $A\in\BorelUnitDiam$.
The PP $\varphi_\ClosureOf{A}$ on the closure $\ClosureOf{A}$ of $A$ chooses no point with probability $1-M(\ClosureOf{A})$ and one point with probability $M(\ClosureOf{A})$ distributed with the density $M(\D{x})/M(\ClosureOf{A})$.
The avoidance functions of $\varphi_\ClosureOf{A}$ and $\ClosureOf{A}\ShearersPPM$ coincide, because, for each $A\supseteq B\in\BorelBounded$,
\begin{align*}
 \Proba(\varphi_\ClosureOf{A}(B)=0)
 ={}& 1-M(\ClosureOf{A}) + M(\ClosureOf{A})
  \int_{\ClosureOf{A}\setminus B} \frac{M(\D{x})}{M(\ClosureOf{A})}
 \\={}& 1-M(\ClosureOf{A}) + M(\ClosureOf{A}\setminus B)
 = 1 - M(B)
 = \GfM{B}\,.
\end{align*}
For a a countable dense subset $S$ of $\Space$, consider the following countable subset of $\BorelUnitDiam$.
\begin{equation*}
 \ClosedUnions:=\DescSet{
  \DescSet{x\in\Space}%
          {\Distance{x}{s}\le \alpha\text{ or }\Distance{x}{t}\le\alpha}
  }{
  s,t\in S\text{ with }1-3\alpha:=\Distance{s}{t}<1
  }
 \,.
\end{equation*}
If $\Distance{x}{y}<1$, then there is a closed $A\in\ClosedUnions$ containing both $x$ and $y$.
Therefore,
\begin{align*}
 \Proba(\ShearersPPM\text{ is not $1$-hard-core})
 ={}
 &\Proba(\inf\DescSet{\Distance{x}{y}}{x,y\in\ShearersPPM}<1)
 \\={}
 &\Proba(\exists\, A\in\ClosedUnions: \ShearersPPM(A)\ge 2)
 \le{}
 \sum_{A\in\ClosedUnions}\Proba(\varphi_A\ge 2)
 ={}
 0
 \,.
\end{align*}
\par
For closed $A\in\BorelUnitDiam$ and all $A\supseteq B\in\BorelBounded$,
\begin{equation*}
 \Expect(\ShearersPPM(B))
 =\Expect(A\ShearersPPM(B))
 =\Expect(\varphi_A(B))
 = \int_B \frac{1}{M(A)} M(\D{x}) M(A)
 = M(B)
 \,.
\end{equation*}
Linearity of expectations extends this to the intensity measure of $\ShearersPPM$.
\end{proof}

\subsection{Proof of Theorem~\ref{thm_minimality}}
\label{sec_spp_min}

\par
First, prove the statement only over $\BorelFinite$.
The general statement over $\BorelBounded$ follows by taking limits along sequences in $\BorelFinite$ to a limit in $\BorelBounded$.
Let $A,B\in\BorelFinite$.
Assume $\GfM{B}>0$.
Use induction over $k:=\UPartNumOf{A\cup B}$.
Let $\xi\in\OneDepPPM$ with avoidance function $\AvFun$.
If $\AvProba{B}>0$, let $\AvProbaRatio{A}{B}:=\AvProba{A\cup B}/\AvProba{B}$.
If $k=0$, then $A=B=\emptyset$ and $\AvProbaRatio{\emptyset}{\emptyset}=1=\GfMR{\emptyset}{\emptyset}$.
If $k>0$, then telescoping~\eqref{eq_telescope} restricts to the case $A\in\BorelUnitDiam$ and $A\cap B=\emptyset$.
Let $\EnumSet{A_i}_{i=1}^k$ be a $\BorelUnitDiam$-partition of $A\sqcup B$.
For $x\in A$, let $A(x)$ be the unique partition element containing $x$.
Apply Proposition~\ref{prop_fi} twice to get
\begin{subequations}
\label{eq_proof_min}
\begin{equation}
\label{eq_proof_min_first}
\begin{aligned}
 \AvProbaRatio{A}{B}
 ={}& 1 - \int_{A} \AvProbaRatio{B}{B\setminus\UnitSphere{x}}^{-1} M(\D{x})
 \\={}& 1 - \int_{A}
  \AvProbaRatio{B}{B\setminus A(x)}^{-1}
  \AvProbaRatio{B\setminus A(x)}{B\setminus\UnitSphere{x}}^{-1}
  M(\D{x})
\end{aligned}
\end{equation}
and, for $x\in A$,
\begin{equation}
\label{eq_proof_min_second}
 \AvProbaRatio{B}{B\setminus A(x)}
 = 1 - \int_{B\cap A(x)}
  \AvProbaRatio{B\setminus A(x)}{B\setminus A(x)\setminus\UnitSphere{y}}^{-1}
  M(\D{y})
 \,.
\end{equation}
\end{subequations}
\par
The second half of this proof shows that the expansions~\eqref{eq_proof_min} are well-defined.
For $x\in A$ and $y\in A(x)$, one has $A(x)\subseteq\UnitSphere{y}$ and $\UPartNumOf{B\setminus A(x)}\le k-1$.
Hence, the inductive hypothesis applies to the integrand in~\eqref{eq_proof_min_second} and the second factor of the integrand in~\eqref{eq_proof_min_first}.
For $x\in A$, the inductive hypothesis implies that $\AvProba{B\setminus A(x)}>0$.
Apply~\eqref{eq_fe} to~\eqref{eq_proof_min_second} to see that
\begin{align*}
 \AvProbaRatio{B}{B\setminus A(x)}
 \ge{}
 &1 - \int_{B\cap A(x)}
  \GfMR{B\setminus A(x)}{B\setminus A(x)\setminus\UnitSphere{y}}^{-1} M(\D{y})
 \\={}
 &\GfMR{B}{B\setminus A(x)}
 \,.
\end{align*}
Substitute this into~\eqref{eq_proof_min_first}, multiply and see that this implies that $\AvProba{B}>0$.
Apply~\eqref{eq_fe} to~\eqref{eq_proof_min_first} and obtain
\begin{align*}
 \AvProbaRatio{A}{B}
 \ge{}& 1 - \int_{A}
  \GfMR{B}{B\setminus A(x)}^{-1}
  \GfMR{B\setminus A(x)}{B\setminus\UnitSphere{x}}^{-1}
  M(\D{x})
 \\={}& 1 - \int_{A} \GfMR{B}{B\setminus\UnitSphere{x}}^{-1} M(\D{x})
  =  \GfMR{A}{B}\,.
\end{align*}

\subsection{Proof of Theorem~\ref{thm_nonstrict_implies_zero}}
\label{sec_spp_vanishing}

\par
If $M$ has an atom of mass one at $x$, then the strong $0$-dependent PP $\xi$ from Proposition~\ref{prop_class_empty} has $\Proba(\xi(\EnumSet{x})=0)=0$.
Thus, it suffices to consider only measures with atoms smaller than one.
For every $M\in\MeasuresShearerExists\setminus\MeasuresShearerStrict$, the avoidance function of Shearer's PP $\ShearersPPM$ vanishes on some bounded Borel set and Proposition~\ref{thm_nonstrict_implies_zero} follows trivially.
\par
For every $M\not\in\MeasuresShearerExists$, there exists $B\in\BorelBounded$ with $\GfG{B}{M}<0$ and $M(B)>0$.
Let $\ClosureOf{B}$ be the closure of $B$.
Proposition~\ref{prop_root} implies that $\RootM{B}=\min\DescSet{\lambda}{\GfG{B}{\lambda M}\le 0}$, $0<\RootM{\ClosureOf{B}}\le\RootM{B}<1$ and $\RootM{\ClosureOf{B}}M|_{\ClosureOf{B}}\in\MeasuresShearerExists$.
Proposition~\ref{prop_monotonicity} asserts that, for each $\ClosureOf{B}\supseteq A\in\BorelBounded$ and $N\le\RootM{B}M$, $\GfG{A}{N}\ge 0$.
From here on, assume that $B$ is closed with $\GfG{B}{M}<0$.
Let $\Lambda:=\RootM{B}$.
\par
Consider three independent PPs.
\begin{itemize}
\item
Proposition~\ref{prop_class_empty} guarantees the existence of a strong $1$-dependent $\varphi\in\OneDepPPM$.
\item
For a yet undetermined $N\in\MeasuresBounded$, a (maybe non-simple) Poisson($N$) PP $\vartheta$ on $B$.
\item
As $\Lambda M|_B\in\MeasuresShearerExists$, Proposition~\ref{prop_spp_existence} guarantees the existence Shearer's PP $\ShearersPP{\Lambda M}$.
\end{itemize}
Recall that $\SupportPPOf{\psi}$ is the simple support PP of a general PP $\psi$.
The target PP is $\xi := (\Space\setminus B)\varphi + \SupportPPOf{(\ShearersPP{\Lambda M}+\vartheta)}$.
As all three component PPs are strong $1$-dependent, so is $\xi$.
Because
$\Proba(\xi(B)=0)
 =\Proba(\ShearersPP{\Lambda M}(B)=0,\vartheta(B)=0)
 \le \Proba(\ShearersPP{\Lambda M}(B)=0)
 = 0
$, the avoidance probability of $\xi$ vanishes on $B$.
\par
To determine $N$, verify that it has finite mass on $B$ and that $M$ is the intensity of $\xi$.
Let $\Atoms$ be the atoms of $M$ in $B$.
The diffuse domain is $\DiffuseDomain:=B\setminus\Atoms$.
Set $N|_\DiffuseDomain:=(1-\Lambda)M|_\DiffuseDomain$, which is finite.
Because $\DiffuseDomain\ShearersPP{\Lambda M}$ and $\DiffuseDomain\vartheta$ are simple and independent, for each $B\supseteq A\in\BorelBounded$,
\begin{equation*}
 \Expect(\xi(\DiffuseDomain\cap A))
 = \Expect(\SupportPPOf{(\ShearersPP{\Lambda M}+\vartheta)}(\DiffuseDomain\cap A))
 = \Expect(\ShearersPP{\Lambda M}(\DiffuseDomain\cap A))
    + \Expect(\vartheta(\DiffuseDomain\cap A))
 = M(\DiffuseDomain\cap A)
 \,.
\end{equation*}
For an atom $x\in\Atoms$, let $n_x := \Expect(\vartheta(\EnumSet{x}))$.
The construction demands that
\begin{align*}
 m_x
 :=\Expect(\xi(\EnumSet{x}))
 ={}& \Expect(\SupportPPOf{(\ShearersPP{\Lambda M}+\vartheta)}(\EnumSet{x}))
 \\={}& \Proba(\ShearersPP{\Lambda M}(\EnumSet{x})=1) + \Proba(\ShearersPP{\Lambda M}(\EnumSet{x})=0,\vartheta(\EnumSet{x})\ge 1)
 \\={}& \Expect(\ShearersPP{\Lambda M}(\EnumSet{x}))
   + \GfG{\EnumSet{x}}{\Lambda M}(1-\Proba(\vartheta(\EnumSet{x})=0))
\\={}& \Lambda m_x + (1-m_x)(1-\exp(-n_x))\,.
\end{align*}
Since $m_x<1$, then so is $\frac{(1-\Lambda)m_x}{1-\Lambda m_x}<1$ and
\begin{equation*}
 n_x
 = -\log\left(1-\frac{(1-\Lambda)m_x}{1-\Lambda m_x}\right)
 \stackrel{\eqref{eq_log_inequality}}{\le}\frac%
  {\frac{(1-\Lambda)m_x}{1-\Lambda m_x}}
  {1-\frac{(1-\Lambda)m_x}{1-\Lambda m_x}}
 = \frac{(1-\Lambda) m_x}{1-m_x}\,.
\end{equation*}
As $M(\Atoms)\le M(B)<\infty$ and all atoms of $M$ are less than one, let $m_B:=\max\DescSet{m_x}{ x\in\Atoms}<1$.
The finiteness of the atomic part of $N$ follows from
\begin{equation*}
 N(\Atoms)
 = \sum_{x\in\Atoms} n_x
 \le \sum_{x\in\Atoms} \frac{(1-\Lambda) m_x}{1-m_x}
 \le \frac{(1-\Lambda)}{1-m_B} \sum_{x\in\Atoms} m_x
 = \frac{(1-\Lambda)}{1-m_B} M(\Atoms)
 <\infty\,.
\end{equation*}

\subsection{Intrinsic coupling and monotonicity}
\label{sec_monotonicity}

\begin{Prop}
\label{prop_thinning}
Let $\ShearersPP{M}$ be Shearer's PP with intensity measure $M$.
Let $p:\Space\to[0,1]$ be measurable.
Define $N\in\MeasuresBounded$ by $N(B):=\int_B p(x) M(\D{x})$.
The independent $p$-thinning~\cite[Section 11.3]{Daley_VereJones__AnIntroductionToTheTheoryOfPointProcesses_II__Springer_2008} of $\ShearersPP{M}$ has the same law as $\ShearersPP{N}$.
\end{Prop}

\begin{proof}
Independent thinning preserves strong $1$-dependence and the $1$-hard-core.
It also implies the intensity measure $N$.
Conclude by the uniqueness from Proposition~\ref{prop_spp_characterisation_uniqueness}.
\end{proof}

\begin{proof}[Proof of Theorem~\ref{thm_monotonicity}]
Choose the thinning in Proposition~\ref{prop_thinning} with $p=\frac{\D{N}}{\D{M}}$.
This proves that $\MeasuresShearerExists$ and $\MeasuresShearerStrict=\MeasuresPhasePositive$ are \emph{down-sets}.
The proofs of Proposition~\ref{prop_class_empty} and Theorem~\ref{thm_nonstrict_implies_zero} imply that $\MeasuresEmpty$ and $\MeasuresPhaseZero\sqcup\MeasuresEmpty$ are \emph{up-sets} respectively.
\end{proof}

\subsection{Shearer's PP is different}
\label{sec_spp_different}

\par
Except in trivial cases (zero intensity measure, space with isolated small components), Shearer's PP differs from other well known hard-core PPs.
Because a hard-core radius lower bounds a dependence radius, the difficulty is combining $1$-dependence and $1$-hard-core.
Shearer's PP is not a Poisson PP, one of Matérn's constructions~\cite{Matern__SpatialVariation_StochasticModelsAndTheirApplicationToSomeProblemsInForestSurveysAndOtherSamplingInvestigations__MFSS_1960,Stoyan_Stoyan__OnOneOfMaternsHardcorePointProcessModels__MN_1985,Teichmann_Ballani_Boogaart__GeneralisationsOfMaternsHardcorePointProcesses__SpatStat_2013}, a hard-sphere model as in Section~\ref{sec_hardspheres} and is neither a determinantal nor permanental PP~\cite{Borodin__DeterminantalPointProcesses__OUP_2011,Eisenbaum__StochasticOrderForAlphaPermanentalPointProcesses__SPA_2012}.
In special cases explicit constructions are possible, though.
On the graph $\Integers$ for homogeneous intensity and all radii $R$~\cite{Mathieu_Temmel__KIndependentPercolationOnTrees__SPA_2012}, on $\Reals$ for homogeneous intensity and all radii $R$~\cite{HoferTemmel__ShearersPointProcessAndTheHardSphereModelInOneDimension} and on chordal graphs for $R=2$ and all admissible intensities~\cite{HoferTemmel_Lehner__CliqueTreesOfInfiniteLocallyFiniteChordalGraphs}.
\par
The space $(\Space,\Metric)$ is \emph{$r$-connected}, if each pair of points is part of a finite point sequence with consecutive pairwise distance less than $r$, and $(\Space,\Metric)$ is \emph{$r$-disconnected}, if $\inf\DescSet{\Distance{x}{y}}{\EnumSet{x,y}\subseteq\Space}\ge r$.
If $(\Space,\Metric)$ is $1$-disconnected, then Shearer's PP is a product BRF.
For the remainder of this section, assume that $(\Space,\Metric)$ is $1$-connected with diameter greater than one and that, for all $x\in\Space$ and each neighbourhood $B$ of $x$, $M(B)>0$.

\begin{Prop}
\label{prop_spp_not_matern}
Shearer's PP is not a Matérn-style hard-core PP.
\end{Prop}

\begin{proof}
\par
Let $N\in\MeasuresBounded$.
For the extreme cases of diffuse and atomic $N$, let $\varphi$ be a Poisson($N$) PP and a product BRF of intensity $N$ respectively.
Additionally, attach iid Uniform($[0,1]$) marks to points of $\varphi$.
There are disjoint $A_1,A_2,A_3\in\BorelUnitDiam$ with positive $M$-measure with the diameters of $A_1\sqcup A_2$ and $A_2\sqcup A_3$ smaller than $R$ and $\Distance{A_1}{A_3}\ge R$.
This reduces the setting to BRFs on the graph $G:=(\IntRange{3},\EnumSet{\EnumSet{1,2},\EnumSet{2,3}})$ with the geodesic metric $d$, inducing the metric space $(\IntRange{3},2d)$.
The aim is to show that an $1$-hard-core clashes with independence of the marginals at $1$ and $3$.
Let $(n_1,n_2,n_3)$ and $(m_1,m_2,m_3)$ be the underlying and positive target atomic intensities respectively.
\par
For the Matérn~I hard-core PP~\cite{Matern__SpatialVariation_StochasticModelsAndTheirApplicationToSomeProblemsInForestSurveysAndOtherSamplingInvestigations__MFSS_1960,Stoyan_Stoyan__OnOneOfMaternsHardcorePointProcessModels__MN_1985}, delete all points of $\varphi$ having at least another point at distance less than $1$.
The target intensities are $m_1 = n_1 (1-n_2)$, $m_2 = (1-n_1)n_2(1-n_3)$ and $m_3 = (1-n_2) n_3$.
$1$-dependence demands that $n_1 (1-n_2)^2 n_3 = m_1 m_3 = n_1 (1-n_2) n_3$.
This implies that $n_2=0$ and contradicts $m_2>0$.
\par
To obtain the Matérn~II hard-core PP~\cite{Matern__SpatialVariation_StochasticModelsAndTheirApplicationToSomeProblemsInForestSurveysAndOtherSamplingInvestigations__MFSS_1960,Stoyan_Stoyan__OnOneOfMaternsHardcorePointProcessModels__MN_1985}, delete every point $x$ of $\varphi$ whose mark $l$ fulfils $l=\max\DescSet{h}{ (y,h)\in\varphi, \Distance{x}{y}<1}$.
By symmetry, the comparison between the labels on neighbouring sites yields a probability of $1/2$ in a site's favour.
The target intensities are $m_1 = n_1 (1-n_2/2)$, $m_2 = n_2 (1 - n_1 /4)(1-n_3 /4)$ and $m_3 = n_3 (1-n_2/2)$.
$1$-dependence demands that $n_1 (1-n_2/2)^2 n_3 = m_1 m_3 = n_1 (1-3n_2/4) n_3$.
This implies that $n_2=0$, a contradiction to $m_2>0$.
\par
A marked point $(x,l)$ inhibits a marked point $(y,k)$, if $\Distance{x}{y}<1$ and $l\le k$.
A marked point $(x,l)$ is uninhibited, if it fulfils $l=\min\DescSet{h}{ (y,h)\in\varphi, \Distance{x}{y}<1}$.
Iteratively, delete all inhibited points.
Uninhibited points only contribute once to the deletion and every $1$-connected cluster of points contains at least one uninhibited point.
Hence, the deletion procedure stabilises locally almost-surely and the resulting PP is the Matérn~III hard-core PP~\cite{Teichmann_Ballani_Boogaart__GeneralisationsOfMaternsHardcorePointProcesses__SpatStat_2013} with radius $1$.
The target intensities are $m_1 = n_1 (1-n_2/2+n_2 n_3/4)$ and $m_3 = n_3 (1-n_2/2+n_2 n_1/4)$.
This implies that $n_1,n_3\in]0,1[$.
$1$-dependence demands that
$n_1 n_3 (1-n_2/2+n_2 n_3/4) (1-n_2/2+n_2 n_1/4)
 = m_1 m_3
 = n_1 (1-n_2 /4) n_3
$.
This is impossible, as the lhs is always bigger than the rhs.
\end{proof}

\begin{Prop}
\label{prop_spp_not_hs}
Shearer's PP is not the hard-sphere model.
\end{Prop}

\begin{proof}
\par
Let $A,B\in\BorelUnitDiam$ with $A\subsetneq B$ and $0<M(A)<M(B)$.
Let $h_B$ be the hard-sphere model with fugacity $N$ on $B$.
It has avoidance function $N/(1+N(B))$.
Demanding equal laws for Shearer's PP of intensity $N$ and the hard-sphere model with fugacity $N$ on $A$, $B$ and $B\setminus A$ leads to only trivial solutions of
\begin{equation*}
 \frac{N(B)}{1+N(B)}
 = M(B)
 = M(A) + M(B\setminus{}A)
 = \frac{N(A)}{1+N(A)}+\frac{N(B)-N(A)}{1+M(B)-M(A)}
 \,.
\end{equation*}
\end{proof}

\begin{Prop}
\label{prop_spp_not_det_perm}
Shearer's PP is neither determinantal nor permanental.
\end{Prop}

\begin{proof}
\par
The higher moment densities of a determinantal PP are the determinants of a matrix with entries from a bivariate, symmetric and measurable kernel $K: \Space^2\to\Reals$.
Consider the correlation function of $n$ points, i.e.
the Radon-Nikodyn derivative of the $n$-th factorial moment measure of $\ShearersPPM$ with respect to the $n$-fold product of $M$.
It depends only on the $1$-connected graph structure of the $n$ points and takes values $1$ and $0$, for $1$-disconnected graphs and graphs containing at least one $1$-edge, respectively.
\par
For $x\in\Space$, this yields $\det K(x,x) = 1$.
For $\EnumSet{x,y}\subseteq\Space$, this yields $K(x,y)=\pm 1$ and $K(x,y)=0$ for  $\Distance{x}{y}<1$ and $\Distance{x}{y}\ge 1$ respectively.
For $n=3$, the graph $(\EnumSet{x,y,z},\EnumSet{\EnumSet{x,y},\EnumSet{y,z}})$ yields the contradiction $0 = 1 - K(x,y)^2 - K(y,z)^2 = -1$.

\par
The attraction of permanental PPs contradicts the $1$-hard-core of Shearer's PP.
\end{proof}

\section{Proofs of the LLLs}
\label{sec_lll_proofs}

\begin{Lem}
\label{lem_upartbound_equivalence}
If~\eqref{eq_upartnum_finite} holds, then $\BorelBounded=\BorelFinite$.
\end{Lem}

\begin{proof}
Induction on $\diam(B):=\sup\DescSet{\Distance{x}{y}}{ x,y\in B}$ shows that, for each $B\in\BorelBounded$, $\UPartNumOf{B}\le\UPartSup^{\lfloor\diam(B)\rfloor}<\infty$.
\end{proof}

\begin{proof}[Proof of Theorem~\ref{thm_lll_rough}]
\par
By~\eqref{eq_upartnum_finite} and Lemma~\ref{lem_upartbound_equivalence}, $\BorelBounded=\BorelFinite$.
Let $A,B\in\BorelFinite$.
This proof uses induction over $k:=\UPartNumOf{A\cup B}$.
If $k=0$, then $A=B=\emptyset$ and $\GfMR{\emptyset}{\emptyset}=1$.
If $k>0$, then telescope~\eqref{eq_telescope} to restrict to $A\setminus B\in\BorelUnitDiam$.
Let $\EnumSet{A_i}_{i=1}^k$ be a $\BorelUnitDiam$-partition of $A\cup B$.
For $x\in A\cup B$, let $A(x)$ be the partition element containing $x$.
Apply~\eqref{eq_fe} twice to get
\begin{subequations}
\label{eq_proof_lll_fe}
\begin{equation}
\label{eq_proof_lll_fe_first}
\begin{aligned}
 \GfMR{A}{B}
 ={}& 1 - \int_{A\setminus B} \GfMR{B}{B\setminus\UnitSphere{x}}^{-1} M(\D{x})
 \\={}& 1 - \int_{A\setminus B}
  \GfMR{B}{B\setminus A(x)}^{-1}
  \GfMR{B\setminus A(x)}{B\setminus\UnitSphere{x}}^{-1}
  M(\D{x})
\end{aligned}
\end{equation}
and, for $x\in A\setminus B$,
\begin{equation}
\label{eq_proof_lll_fe_second}
\begin{aligned}
 \GfMR{B}{B\setminus A(x)}
 = 1 - \int_{B\cap A(x)}
  \GfMR{B\setminus A(x)}{B\setminus\UnitSphere{y}}^{-1}
  M(\D{y})
 \,.
\end{aligned}
\end{equation}
\end{subequations}
\par
For $x\in A\setminus B$ and $y\in A(x)$, $A(x)\subseteq\UnitSphere{y}$, whence $\UPartNumOf{B\setminus\UnitSphere{y}}\le\UPartNumOf{B\setminus A(x)}\le k-1$.
Thus, the inductive hypothesis applies to the integrand of~\eqref{eq_proof_lll_fe_second} and the second factor in~\eqref{eq_proof_lll_fe_first}.
Bounding the integrand of~\eqref{eq_proof_lll_fe_second} by the inductive hypothesis~\eqref{eq_lll_rough_bound} leads to
\begin{align*}
 \GfMR{B}{B\setminus A(x)}
 \ge{}
 & 1 - \int_{B\cap A(x)}
  \left(\frac{\UPartSup+2}{\UPartSup+1}\right)^{\UPartNumOf{\UnitSphere{y}}}
  M(\D{y})
 \\\stackrel{\eqref{eq_upartnum_finite}}{\ge}{}
 & 1 - \left(\frac{\UPartSup+2}{\UPartSup+1}\right)^{\UPartSup}
  M(B\cap A(x))
 \\\stackrel{\eqref{eq_lll_rough_measure}}{\ge}
 & 1 - \left(\frac{\UPartSup+2}{\UPartSup+1}\right)^{\UPartSup}
  \frac{(\UPartSup+1)^{\UPartSup+1}}{(\UPartSup+2)^{\UPartSup+2}}
 \\\ge{}
 & 1 - \frac{1}{\UPartSup+2}
  = \frac{\UPartSup+1}{\UPartSup+2}
 \,.
\end{align*}
Substituting this into the rhs of~\eqref{eq_proof_lll_fe_first} and bounding the right factor of the integrand by the inductive hypothesis~\eqref{eq_lll_rough_bound} leads to
\begin{align*}
 \GfMR{A}{B}
 \ge{}
 & 1 - \int_{A\setminus B} \frac{\UPartSup+2}{\UPartSup+1}
 \left(\frac{\UPartSup+2}{\UPartSup+1}\right)^{\UPartNumOf{\UnitSphere{x}}}
 M(\D{x})
 \\\stackrel{\eqref{eq_upartnum_finite}}{\ge}{}
 & 1 - \left(\frac{\UPartSup+2}{\UPartSup+1}\right)^{\UPartSup+1}
  M(A\setminus B)
 \\\stackrel{\eqref{eq_lll_rough_measure}}{\ge}{}
 & 1 - \left(\frac{\UPartSup+2}{\UPartSup+1}\right)^{\UPartSup+1}
  \frac{(\UPartSup+1)^{\UPartSup+1}}{(\UPartSup+2)^{\UPartSup+2}}
 \\={}
 & 1 - \frac{1}{\UPartSup+2} = \frac{\UPartSup+1}{\UPartSup+2}
 \,.\qedhere
\end{align*}
\end{proof}

\begin{proof}[Proof of Theorem~\ref{thm_lll_generic}]
By~\eqref{eq_upartnum_finite} and Lemma~\ref{lem_upartbound_equivalence}, $\BorelBounded=\BorelFinite$.
Theorem~\ref{thm_lll_generic} follows from telescoping the statement $\forall A\in\BorelUnitDiam, B\in\BorelBounded:\GfMR{A}{B}\ge\frac{\alpha}{1+\alpha}$.
Let $N:=(1+\alpha)M$.
The equality~\eqref{eq_fe} and the monotonicity of $\GfRSymbol$ in $M$ from Proposition~\ref{prop_monotonicity} imply that
\begin{align*}
 0\le \GfR{A}{B}{N}
 ={}& 1-\int_{A\setminus B} \GfR{B\setminus A}{B\setminus\UnitSphere{x}}{N}^{-1} N(\D{x})
 \\\le{}& 1-\int_{A\setminus B}
  \GfR{B\setminus A}{B\setminus\UnitSphere{x}}{M}^{-1}\frac{dN}{dM}(x) M(\D{x})
 \\\le{}& 1- (1+\alpha)\int_{A\setminus B}
  \GfR{B\setminus A}{B\setminus\UnitSphere{x}}{M}^{-1} M(\D{x})
 \\={}& 1-(1+\alpha) (1-\GfR{A}{B}{M})
 \,.\qedhere
\end{align*}
\end{proof}

\begin{proof}[Proof of Theorem~\ref{thm_lll_smooth}]
\par
Assuming that~\eqref{eq_lll_smooth_bound} holds for $A,B\in\BorelFinite$, the general case follows from a limiting argument.
Let $A,B\in\BorelBounded$ with $A\cap B=\emptyset$.
Take sequences $(A_n)_{n\in\Naturals}$ and $(B_n)_{n\in\Naturals}$ in $\BorelFinite$ exhausting $A$ and $B$ respectively.
The monotonicity of $\GfRSymbol$ in space from Proposition~\ref{prop_monotonicity} implies that
\begin{equation*}
 \GfMR{A}{B}
 = \lim_{n\to\infty} \GfMR{A_n}{B_n}
 \ge \lim_{n\to\infty} \exp(-N(A_n\setminus B_n))
 = \exp(-N(A\setminus B))\,.
\end{equation*}
\par
It remains to prove~\eqref{eq_lll_smooth_bound} for $A,B\in\BorelFinite$.
The first part of the proof is verbatim the same as the one of Theorem~\ref{thm_lll_rough}, leading to~\eqref{eq_proof_lll_fe}.
For $x\in A$, bounding the integrand in~\eqref{eq_proof_lll_fe_second} by the inductive hypothesis~\eqref{eq_lll_smooth_bound} leads to
\begin{align*}
 \GfMR{B}{B\setminus A(x)}
 \ge{}& 1 - \int_{B\cap A(x)}
  \exp(N((B\setminus A(x))\cap\UnitSphere{y})) M(\D{y})
 \\\ge{}& 1 - \int_{B\cap A(x)}
  \exp(N(\UnitSphere{y}\setminus (B\cap A(x)))) M(\D{y})
 \\\stackrel{\eqref{eq_lll_smooth_condition}}{\ge}{}
 & \exp(-N(B\cap A(x)))
 \,.
\end{align*}
Substituting this into the rhs of~\eqref{eq_proof_lll_fe_first} and using the inductive hypothesis on the right factor of the integrand leads to
\begin{align*}
 \GfMR{A}{B}
 \ge{}& 1 - \int_{A\setminus B}
   \exp(N(B\cap A(x))) \exp(N((B\cup\UnitSphere{x})\setminus A(x))) M(\D{x})
 \\={}& 1 - \int_{A\setminus B}
  \exp(N(B\cap\UnitSphere{x})) M(\D{x})
 \\\ge{}& 1 - \int_{A\setminus B}
  \exp(N(\UnitSphere{x}\setminus(A\setminus B))) M(\D{x})
 \\\stackrel{\eqref{eq_lll_smooth_condition}}{\ge}{}
 & \exp(-N(A\setminus B))
 \,.\qedhere
\end{align*}
\end{proof}

\begin{proof}[Proof of Corollary~\ref{cor_lll_lebesgue}]
Here $X=\Reals^d$, $M=\lambda\Lebesgue$ and $N=\alpha\Lebesgue$.
For each $A\in\BorelUnitDiam$ with $L:=\Lebesgue(A)$, condition~\eqref{eq_lll_smooth_condition_stronger} rewrites into $\lambda L e^{\alpha (V-L)} \le 1-e^{-\alpha L}$.
The identity $\lambda e^{\alpha V}=\alpha$ simplifies this to $\alpha L e^{-\alpha L} + e^{-\alpha L} \le 1$.
The last inequality is just $1+z\le e^z$ with $z:=\alpha L\ge 0$.
Thus, condition~\eqref{eq_lll_smooth_condition} holds.
Conclude via Theorem~\ref{thm_lll_smooth}.
\end{proof}

\section{Additional material}

\subsection{Association and ordering properties}
\label{sec_association_ordering}

\par
This section comments on various ordering properties of Shearer's PP with respect to factorial moment ordering and void function ordering.
It also contrasts the association of the avoidance function of Shearer's PP and the hard-sphere model.
Terminology is taken from~\cite{Blaszczyszyn_Yogeshwaran__ClusteringComparisonOfPointProcessesWithApplicationsToRandomGeometricModels__Springer_2015} or~\cite{Blaszczyszyn_Yogeshwaran__OnComparisonOfClusteringPropertiesOfPointProcesses__AAP_2014}.

\par
Using Proposition~\ref{prop_spp_characterisation_uniqueness} together with the bound~\eqref{eq_int_bound}, implies that all factorial and ordinary moment measures of $\ShearersPPM$ are less than the ones of a Poisson PP of intensity $M$.
In other words, $\ShearersPPM$ is a \emph{$\alpha$-weakly sub Poisson PP}.

\par
For Shearer's PP $\ShearersPPM$, Proposition~\ref{prop_monotonicity} implies that, for disjoint $A,B\in\BorelBounded$,
\begin{subequations}
\label{eq_association}
\begin{equation}
\label{eq_association_spp}
 \Proba(\ShearersPPM(A\sqcup B)=0)
 = \GfG{A\sqcup B}{M}
 \le \Proba(\ShearersPPM(A)=0)\Proba(\ShearersPPM(B)=0)
 \,.
\end{equation}
By~\cite[Proposition 3.1]{Blaszczyszyn_Yogeshwaran__OnComparisonOfClusteringPropertiesOfPointProcesses__AAP_2014}, $\ShearersPPM$ is a \emph{$\nu$-weakly sub Poisson PP}, i.e., $\Proba(\ShearersPPM(B)=0)\le\exp(-M(B))$ holds, for all $B\in\BorelBounded$.
Note: \cite[Proposition 3.1]{Blaszczyszyn_Yogeshwaran__OnComparisonOfClusteringPropertiesOfPointProcesses__AAP_2014} only demands a diffuse measure for the super-weakly Poisson direction.

\par
In the context of Corollary~\ref{cor_lll_lebesgue}, the avoidance function of $\ShearersPP{\lambda\Lebesgue}$ is bigger than the avoidance function of a homogeneous Poisson PP of intensity $\alpha$.
Because $\lambda<\lambda e^{-\lambda V}=\alpha$ (as $V\ge 1$ and assuming that $\lambda\not=0$), $\ShearersPP{\lambda\Lebesgue}$ is a \emph{$\nu$-weakly super Poisson($\alpha$) PP}.
So its avoidance function is bigger than the one of a Poisson intensity of the bigger intensity $\alpha$.

\par
For $k\in\NonNegInts$, let $f:\Reals\to\EnumSet{0,1}$ be the indicator function of zero and $g:=1-f$.
The functions $f$ and $g$ are monotone decreasing and increasing, respectively.
For each PP $\xi$ and $B\in\BorelBounded$, $\Proba(\xi(B)=0)=\Expect(f(\xi(A)))$.
With $a:=\Expect(g(\ShearersPPM(A)))$ and $b:=\Expect(g(\ShearersPPM(B)))$, the inequality~\ref{eq_association_spp} rewrites as
\begin{align*}
 \Proba(\ShearersPPM(A\sqcup B)=0)
 ={}
 & \Expect(f(\ShearersPPM(A\sqcup B)))
 \\={}
 & \Expect(f(\ShearersPPM(A))f(\ShearersPPM(B)))
 \\={}
 & \Expect((1-g(\ShearersPPM(A)))(1-g(\ShearersPPM(B))))
 \\={}
 & 1 - a - b + \Expect(g(\ShearersPPM(A))g(\ShearersPPM(B)))
 \\\le{}
 & 1 - a - b + ab
 \\={}
 & \Expect(1-g(\ShearersPPM(A)))\Expect(1-g(\ShearersPPM(B)))
 \\={}
 & \Expect(f(\ShearersPPM(A)))\Expect(f(\ShearersPPM(B)))
 \\={}
 & \Proba(\ShearersPPM(A)=0)\Proba(\ShearersPPM(B)=0)
 \,.
\end{align*}
This implies that
\begin{equation*}
 \frac%
  {\Expect(g(\ShearersPPM(A))g(\ShearersPPM(B)))}
  {\Expect(g(\ShearersPPM(A)))\Expect(g(\ShearersPPM(B)))}
 \le
 0
 \,.
\end{equation*}
The extension to more than two disjoint sets via Proposition~\ref{prop_monotonicity} is straightforward.
Thus, the avoidance function of $\ShearersPPM$ exhibits \emph{negative association}.

\par
For $A,B\in\BorelBounded$, the spatial submultiplicativity of $\GfG{A\sqcup B}{-M}$ implies that
\begin{equation}
\label{eq_association_hs}
 \Proba(\HardSphereM{A\sqcup B}(A\sqcup B)=0)
 = \frac{1}{\GfG{A\sqcup B}{-M}}
 \ge \Proba(\HardSphereM{A}(A)=0)\Proba(\HardSphereM{B}(B)=0)\,.
\end{equation}
\end{subequations}
Hence, the avoidance function of the hard-sphere model exhibits \emph{positive association}.

\par
The question whether $\ShearersPPM$ is \emph{negatively associated} (cf.~\cite[Sec 2.3]{Blaszczyszyn_Yogeshwaran__OnComparisonOfClusteringPropertiesOfPointProcesses__AAP_2014}) is still open.

\subsection{Miscellaneous proofs}

\par
The classic Chu-Vandermonde identity is
\begin{equation*}
 \Pochhammer{(a+b)}{n}
 = \sum_{n_a=0}^n \binom{n}{n_a} \Pochhammer{a}{n_a}\Pochhammer{b}{n-n_a}
 = \sum_{n_a,n_b} \binom{n}{n_a,n_b} \Pochhammer{a}{n_a}\Pochhammer{b}{n_b}
 \,.
\end{equation*}
The multinomial form in the proof of Proposition~\ref{prop_spp_characterisation_uniqueness} follows by induction.
For $k\ge 3$, one has
\begin{align*}
 \Pochhammer{\left(\sum_{i\in\IntRange{k}} a_i\right)}{n}
 ={}
 &\sum_{n_k=0}^n \binom{n}{n_k} \Pochhammer{a_k}{n_k}
   \Pochhammer{\left(\sum_{i\in\IntRange{k-1}} a_i\right)}{n-n_k}
 \\={}
 &\sum_{n_k=0}^n \binom{n}{n_k} \Pochhammer{a_k}{n_k}
  \sum_{n_1,\dotsc,n_{k-1}}
   \binom{n-n_k}{n_1,\dotsc,n_{k-1}}
   \prod_{i\in\IntRange{k-1}} \Pochhammer{a_i}{n_i}
 \\={}
 &\sum_{n_1,\dotsc,n_k} \binom{n}{n_1,\dotsc,n_k}
   \prod_{i=1}^k \Pochhammer{a_i}{n_i}
 \,.
\end{align*}

\par
The trivial solutions in the proof of Proposition~\ref{prop_spp_not_hs} are as follows.
Let $b:=N(B)$ and $a:=N(A)$.
The condition rewrites to
\begin{equation*}
 \frac{b}{1+b}
 = \frac{a}{1+a} + \frac{b-a}{1+b-a}
 = \frac{a+ab-a^2+b-a+ab-a^2}{(1+a)(1+b-a)}
 = \frac{2ab - 2a^2 +b}{(1+a)(1+b-a)}
 \,.
\end{equation*}
This yields
\begin{align*}
 0 ={}& (2ab - 2a^2 +b)(1+b)-b(1+a)(1+b-a)
   \\={}& -2a^2+b(1+a) + b(2ab-2a^2+b) -b(1+b+ab-a^2)
   \\={}& 2a^2+b(a+ab-a^2)
   \\={}& a(2a+b+b^2+ab)
\end{align*}
Thus, either $a=0$, or $a>0$ and $b^2 + b(1+a)+2a = 0$. The latter equation has no solution with $b\ge 0$.

\par
The determinants used in the proof of Proposition~\ref{prop_spp_not_det_perm} are as follows.
For $n=1$ and each $x\in\Space$, this implies that $\det K(x,x) = 1$.
For $n=2$ and all $x,y\in\Space$ with $\Distance{x}{y}<1$, this yields $0=1-K(x,y)^2$ and $K(x,y)=\pm 1$.
For $n=2$ and all $x,y\in\Space$ with $\Distance{x}{y}\ge 1$, this yields $1=1-K(x,y)^2$ and $K(x,y)=0$.
For $n=3$, the graph $(\EnumSet{x,y,z},\EnumSet{\EnumSet{x,y},\EnumSet{y,z}})$ yields the contradiction
\begin{equation*}
 0 =
 \begin{vmatrix}
  1 & K(x,y) & 0
  \\
  K(x,y) & 1 & K(z,y)
  \\
  0 & K(z,y) & 1
 \end{vmatrix}
 = 1 - K(x,y)^2 - K(y,z)^2
 = -1
 \,.
\end{equation*}

\subsection{Direct proof of homogeneous LLL in Euclidean space}

\begin{proof}[Direct proof of Corollary~\ref{cor_lll_lebesgue}]
Because $\Space=\Reals^d$ satisfies~\eqref{eq_upartnum_finite}, Lemma~\ref{lem_upartbound_equivalence} asserts that $\BorelBounded=\BorelFinite$. Thus, the proof continues from the proof of Theorem~\ref{thm_lll_rough} at~\eqref{eq_proof_lll_fe}. Recall that $M=\lambda\Lebesgue$. For $x\in A\setminus B$, let $l_x:=\Lebesgue(B\cap A(x))$. Bounding the integrand of~\eqref{eq_proof_lll_fe_second} by the inductive hypothesis leads to
\begin{align*}
 \GfMR{B}{B\setminus A(x)}
 \ge{}& 1 - \int_{B\cap A(x)}
  \exp(\alpha\Lebesgue(B\setminus A(x))\cap\UnitSphere{y}))
  \,\lambda\Lebesgue(dy)
 \\\ge{}& 1 - \int_{B\cap A(x)}
  \exp(\alpha\Lebesgue(\UnitSphere{y}\setminus (B\cap A(x))))
  \,\lambda\Lebesgue(dy)
 \\={}& 1 - \lambda l_x \exp(\alpha(V-l_x))
  \qquad\text{ as }\lambda\exp(\alpha V)=\alpha
 \\={}& 1 - \alpha l_x \exp(-\alpha l_x)
 \\\ge{}& \exp(-\alpha l_x)
  \qquad\qquad\qquad\quad\,\,\!\text{ as }e^z\ge 1+z
 \\={}& \exp(-\alpha\Lebesgue(B\cap A(x)))
 \,.
\end{align*}
Let $l:=\Lebesgue(A\setminus B)$. Substituting this into the rhs of~\eqref{eq_proof_lll_fe_first} and using the inductive hypothesis on the right factor of the integrand leads to
\begin{align*}
 \GfMR{A}{B}
 \ge{}& 1 - \int_{A\setminus B}
   \exp(\alpha\Lebesgue(B\cap A(x)))
   \exp(\alpha\Lebesgue((B\setminus A(x))\cap \UnitSphere{x}))
   \,\lambda\Lebesgue(dx)
 \\\ge{}& 1 - \int_{A\setminus B}
  \exp(\alpha\Lebesgue(\UnitSphere{x}\setminus(A\setminus B)))
  \,\lambda\Lebesgue(dx)
 \\={}& 1 - \lambda l \exp(\alpha(V-l))
  \qquad\text{ as }\lambda\exp(\alpha V)=\alpha
 \\={}& 1 - \alpha l \exp(-\alpha l)
 \\\ge{}& \exp(-\alpha l)
  \qquad\qquad\qquad\quad\!\!\text{ as }e^z\ge 1+z
 \\={}& \exp(-\alpha\Lebesgue(A\setminus B))
 \,.\qedhere
\end{align*}
\end{proof}

\subsection{Inductive proofs}

\begin{Prop}
\label{prop_gfr_bound_upper}
If $A\in\BorelUnitDiam$, then $\GfMR{A}{B}\le 1-M(A)$.
\end{Prop}

\begin{proof} The FE~\eqref{eq_fe}, monotonicity~\eqref{eq_fer_monotonicity_space} and the upper bound $1=\GfM{\emptyset}$ yield
\begin{equation*}
 \GfMR{A}{B}
 =1-\int_A
  \GfMR{
   B\cap\UnitSphere{x}}{
   B\setminus\UnitSphere{x}
  }^{-1} M(dx)
 \le 1 - \int_A 1 M(x)
 = 1 - M(A)
 \,.\qedhere
\end{equation*}
\end{proof}

\begin{Prop}
\label{prop_gfr_bound_lower}
If $A_1,A_2\in\BorelUnitDiam$ are disjoint and $B\in\BorelFinite$ with $(A_1\cup B)\cap A_2=\emptyset$ and $(A_1\sqcup A_2)\in\BorelUnitDiam$, then $\GfMR{A_1}{B}\ge M(A_2)$.
\end{Prop}

\begin{proof} Apply the FE~\eqref{eq_fe}, factorize and bound by $1$ from Proposition~\ref{prop_gfr_bound_upper}:
\begin{align*}
 0\le\GfMR{A_2}{A_1\cup B}
 ={}&1-\int_{A_2}
  \GfMR{A_1\cup B}{(A_1\cup B)\setminus\UnitSphere{x}}^{-1} M(dx)
 \\={}&1-\int_{A_2}
  \GfMR{A_1\cup B}{B\setminus\UnitSphere{x}}^{-1} M(dx)
 \\={}&1-\int_{A_2}
  \GfMR{A_1}{B}^{-1} \GfMR{B}{B\setminus\UnitSphere{x}}^{-1} M(dx)
 \\\le{}&1-\int_{A_2} \GfMR{A_1}{B}^{-1}M(dx)
 \\={}& 1 - \GfMR{A_1}{B}^{-1} M(A_2)
 \,.\qedhere
\end{align*}
\end{proof}

\begin{Prop}
\label{prop_gf_worstcase_positivity}
For all $A,B\in\BorelFinite$ with $A\subseteq B$, if
\begin{equation}
\label{eq_fe_worstcase_positivity}
 \GfM{B}>0 \Then \GfM{A}>0\,.
\end{equation}
\end{Prop}

\begin{proof}
Use induction over $k:=\UPartNumOf{B\setminus A}$. If $k=0$, then $A=B$ and $\GfM{A}=\GfM{B}>0$. If $k=1$, then apply the FE~\eqref{eq_fe} to obtain
\begin{equation*}
 \GfM{B\setminus A}
 = \GfM{B} + \int_{B\setminus A} \GfM{B\setminus\UnitSphere{x}} M(dx)
 \ge \GfM{B}
 > 0\,,
\end{equation*}
as $M\in\MeasuresShearerExists$ implies that the integrand is non-negative. If $k>1$, then choose $\EnumSet{A_i}_{i=0}^k$ with $A=:A_0\subsetneq A_1\subsetneq\dotso\subsetneq A_k:=B$ and $\UPartNumOf{A_i\setminus A_{i-1}}=1$, for all $i\in\IntRange{k}$. Apply the statement for the previous case $k$ times and obtain
\begin{equation*}
 \GfM{A}=\GfM{A_0}\ge\GfM{A_1}\ge\dotso\ge\GfM{A_k}=\GfM{B}>0
 \,.\qedhere
\end{equation*}
\end{proof}

\begin{Prop}
\label{prop_gf_monotonicity_space}
For every $A,B\in\BorelFinite$,
\begin{equation}
\label{eq_fe_monotonicity_space}
 A\subseteq B \Then \GfM{A}\ge\GfM{B}\,.
\end{equation}
\end{Prop}

\begin{proof}
Use induction over $k:=\UPartNumOf{B\setminus A}$. If $k=0$, then $A=B$ and $\GfM{A}=\GfM{B}$. If $k=1$, then apply the FE~\eqref{eq_fe} to obtain
\begin{equation*}
 \GfM{B\setminus A}
 = \GfM{B} + \int_{B\setminus A} \GfM{B\setminus\UnitSphere{x}} M(dx)
 \ge \GfM{B}\,,
\end{equation*}
as $M\in\MeasuresShearerExists$ implies that the integrand is non-negative.  If $k>1$, then choose $\EnumSet{A_i}_{i=0}^k$ with $A=:A_0\subsetneq A_1\subsetneq\dotso\subsetneq A_k:=B$ and $\UPartNumOf{A_i\setminus A_{i-1}}=1$, for all $i\in\IntRange{k}$. Apply the statement for the previous case $k$ times and obtain
\begin{equation*}
 \GfM{A}=\GfM{A_0}\ge\GfM{A_1}\ge\dotso\ge\GfM{A_k}=\GfM{B}
 \,.\qedhere
\end{equation*}
\end{proof}

\begin{Prop}
\label{prop_gfr_boundedByOne_finite}
If $A,B\in\BorelFinite$ with $\GfM{B}>0$, then
\begin{equation}
\label{eq_fer_boundedByOne_finite}
 \GfMR{A}{B}\le 1\,.
\end{equation}
\end{Prop}

\begin{proof}
Without loss of generality, restrict to the case of $A\in\BorelUnitDiam$, telescoping~\eqref{eq_telescope} otherwise. Assume that $A\cap B=\emptyset$. Apply the FE~\eqref{eq_fe} to get
\begin{equation*}
 \GfMR{A}{B}
 = 1-\int_{A} \GfMR{B}{B\setminus\UnitSphere{x}}^{-1} M(dx)
 \le 1\,,
\end{equation*}
where the inequality holds if the integrand is positive. By assumption, $\GfM{B}>0$. Thus, for every $x\in A$, also $\GfM{B\setminus\UnitSphere{x}}>0$ by~\eqref{eq_fe_monotonicity_space} and the integrand $\GfMR{B}{B\setminus\UnitSphere{x}}^{-1}$ is positive.
\end{proof}

\begin{Prop}
\label{prop_gfr_monotonicity_space}
For each $A,A',B,B'\in\BorelFinite$, if
\begin{equation}
\label{eq_fer_monotonicity_space}
 A\subseteq A',
 B\subseteq B',
 A\cap B=A\cap B'
 \Then
 \GfMR{A}{B}\ge\GfMR{A'}{B'}\,.
\end{equation}
\end{Prop}

\begin{proof}
The general case combines the following two inequalities, varying in only one of the two arguments:
\begin{equation*}
 \GfMR{A}{B}
 \ge\GfMR{A'}{B}
 \ge\GfMR{A'}{B'}\,.
\end{equation*}

Case $B=B'$: Factorise and apply the upper bound with value $1$ from~\eqref{eq_fer_monotonicity_space} to the first factor to get
\begin{equation*}
 \GfMR{A'}{B}
 =\GfMR{A'}{A\cup B}\GfMR{A}{B}
 \le \GfMR{A}{B}\,.
\end{equation*}

Case $A=A'$: Use induction over $k:=\UPartNumOf{A\cup B'}$. If $k=0$, then $A=B=B'=\emptyset$ and $\GfMR{\emptyset}{\emptyset}=1$. If $k>0$, telescope~\eqref{eq_telescope} to restrict ourselves to the case $A\in\BorelUnitDiam$ and $A\cap B=A\cap B'=\emptyset$. Let $(A'_i)_{i=1}^k$ be a $\BorelUnitDiam$-partition of $A\sqcup B'$. Derive a $\BorelUnitDiam$-partition $(A_i)_{i=1}^k$ of $A\sqcup B$ via $A_i:=A'_i\cap(A\cup B)$. For $x\in A$, let $A'(x)$ and $A(x)$ be the unique partition elements containing $x$.\\

For $x\in A, y\in A(x)\subseteq A'(x)$, $A(x)\subseteq A'(x)\subseteq\UnitSphere{y}$ and $\UPartNumOf{B\setminus A(x)}\le\UPartNumOf{B'\setminus A'(x)}\le k-1$. Apply the first case and the inductive hypothesis to obtain
\begin{equation*}
 \GfMR{B\setminus A(x)}{B\setminus\UnitSphere{y}}
 \ge\GfMR{B'\setminus A'(x)}{B\setminus\UnitSphere{y}}
 \ge\GfMR{B'\setminus A'(x)}{B'\setminus\UnitSphere{y}}
 \,.
\end{equation*}
Apply the preceding inequality, twice the FE~\eqref{eq_fe} and enlarge the integration domain to get
\begin{align*}
 \GfMR{B\cap A(x)}{B\setminus A(x)}
 ={}&1-\int_{B\cap A(x)}
  \GfMR{B\setminus A(x)}{B\setminus\UnitSphere{y}}^{-1} M(dy)
 \\\ge{}& 1 - \int_{B\cap A(x)}
  \GfMR{B'\setminus A'(x)}{B'\setminus\UnitSphere{y}}^{-1} M(dy)
 \\\ge{}& 1 - \int_{B'\cap A'(x)}
  \GfMR{B'\setminus A'(x)}{B'\setminus\UnitSphere{y}}^{-1} M(dy)
 \\={}&\GfMR{B'\cap A'(x)}{B'\setminus A'(x)}
 \,.
\end{align*}
Again, for each $x\in A\setminus B'$, the preceding inequality for the first factor and the inductive hypothesis for the second factor imply that
\begin{align*}
 \GfMR{B}{B\setminus\UnitSphere{x}}
 ={}&
  \GfMR{B\cap A(x)}{B\setminus A(x)}
  \GfMR{B\setminus A(x)}{B\setminus\UnitSphere{x}}
 \\\ge{}&
  \GfMR{B'\cap A'(x)}{B'\setminus A'(x)}
  \GfMR{B'\setminus A'(x)}{B'\setminus\UnitSphere{x}}
 \\={}&\GfMR{B'}{B'\setminus\UnitSphere{x}}
 \,.
\end{align*}
Another two applications of the FE~\eqref{eq_fe} and the preceding inequality yield
\begin{multline*}
 \GfMR{A}{B}
 = 1-\int_{A}
  \GfMR{B}{B\setminus\UnitSphere{x}}^{-1} M(dx)
 \\\ge 1-\int_{A}
  \GfMR{B'}{B'\setminus\UnitSphere{x}}^{-1} M(dx)
 = \GfMR{A}{B'}
 \,.\qedhere
\end{multline*}
\end{proof}

\section*{Acknowledgements}
\Acknowledgements{}

\bibliographystyle{plain}
\bibliography{ref}

\end{document}